\newcommand{\chuhao}{\fontsize{19pt}{\baselineskip}\selectfont}
  \newtheorem{theorem}{Theorem}[section]
 \newtheorem{lemma}{Lemma}[section]
 \newtheorem{remark}{Remark}[section]
 \newtheorem{example}{Example}[section]
 \newtheorem{corollary}{Corollary}[section]
\newcommand{\Rmnum}[1]{\expandafter\@slowromancap\romannumeral #1@}
\title{\bf\color{black} \chuhao{Potentials of skip-free Markov chains}}
\author{
Wendi Li$^a$,\ \ Jinpeng Liu$^a$,\ \  Yuanyuan Liu$^{b}$\\
\small{\textit{ $^a$School of Mathematics and Statistics, Xidian University,}}\\
\small{\textit{Xi'an, 710071, Shaanxi, P.R. China}}\\
\small{\textit{ $^b$ School of Mathematics and Statistics, HNP-LAMA, Central South University,},}\\
\small{\textit{Changsha, 410083, Hunan, P.R. China}}\\
}
\date{\today}
\begin{document}

 \maketitle

\begin{abstract}
Potential theory has important applications in various fields such as physics, finance, and biology.
In this paper, we investigate the potentials of two classic types of discrete-time skip-free Markov chains: upward skip-free and downward skip-free Markov chains.
The key to deriving these potentials lies in the use of truncation approximation techniques.
The results are then applied to \( GI/M/1 \) queues and \( M/G/1 \) queues, and further extended to continuous-time skip-free Markov chains.
\vskip 0.2cm
\noindent \textbf{Keywords:}\ \ Potential theory, skip-free Markov chains, truncation approximation
\vskip 0.2cm
\noindent {\bf MSC Classiﬁcation:} \ \ 60J10, 60J45.
\end{abstract}

\section{Introduction}\label{sec1}

Classical potential theory is the study of the electric potential $\boldsymbol{\phi}$ that arise from electric fields $\boldsymbol{E}$, governed by relationships such as
\[
-\nabla\boldsymbol{\phi} = \boldsymbol{E},
\]
where $\nabla$ is the gradient operator.
In addition, several other physical theories, such as Newton's theory of gravity, fluid flow, and the diffusion of heat, can also be described within the framework of potential theory; see, e.g., \cite{N97}.
Choquet and Deny \cite{CD56} demonstrated that the basic principles of potential theory apply to potentials $\boldsymbol{\phi}$ defined by more general operators $\nabla$.
Subsequently, Doob \cite{D59} extended classical potential theory to the study of Brownian motion.
Meanwhile, Hunt \cite{H57},  Kemeny and Snell \cite{KS61} explored potential theory of more general Markov chains.

Speciﬁcally, let $\boldsymbol{X}=\{X_k, k\geq0\}$ be an irreducible, discrete-time Markov chain (DTMC) on the state space ${\mathbb{S}}=\{0,1,2,\ldots\}$.
Assume that $\boldsymbol{X}$ is time-homogeneous,
and let $\boldsymbol{P}=(P(i,j))_{i,j\in {\mathbb{S}}}$ be the one step transition probability matrix of $\boldsymbol{X}$ with $\sum_{j\in {\mathbb{S}}}P(i,j)=1$ for any $i\in {\mathbb{S}}$.
Furthermore, assume that $\boldsymbol{c}$ is a finite nonnegative function (or vector) on ${\mathbb{S}}$.
We say that the function $\boldsymbol{\phi}$ is the potential with respect to the chain $\boldsymbol{X}$ and the function $\boldsymbol{c}$ if \(\boldsymbol{\phi}\) satisfies the following equation:
\begin{equation}\label{equation-1}
(\boldsymbol{I}-\boldsymbol{P})\boldsymbol{\phi}=\boldsymbol{c},
\end{equation}
where $\boldsymbol{I}$ is the identity matrix.
Eq.(\ref{equation-1}) is also known as Poisson's equation, in which case $\boldsymbol{c}$ and $\boldsymbol{\phi}$ are referred to as the forcing function and the solution function of Poisson's equation, respectively.

Now, let $\mathbb{E}_{i}[\cdot]:=\mathbb{E}[\cdot|X_0=i]$ denote the conditional expectation given the initial state $i\in{\mathbb{S}}$.
It is established that
\begin{equation*}
\phi(i)=\mathbb{E}_{i}\bigg[\sum_{k=0}^{\infty}c(X_k)\bigg], \ \ i\in{\mathbb{S}},
\end{equation*}
is the minimal nonnegative solution of Eq.(\ref{equation-1}),
in the sense that if $\boldsymbol{\phi}'$ is any nonnegative solution of Eq.(\ref{equation-1}), then $\phi(i)\leq \phi'(i)$ for all $i\in{\mathbb{S}}$; see, e.g., \cite{HG88,N97}.
The potential $\boldsymbol{\phi}$ is closely related to the Green matrix $\boldsymbol{G} = (G(i,j))_{i,j \in \mathbb{S}}$, which is defined as
\[
\boldsymbol{G} := \sum_{k=0}^{\infty} \boldsymbol{P}^k.
\]
It follows from Fubini's theorem that
\begin{equation}\label{Phi-and-G}
\phi(i) = \sum_{k=0}^{\infty} \mathbb{E}_i[c(X_k)] = \sum_{k=0}^{\infty} (\boldsymbol{P}^n\boldsymbol{c})(i)=(\boldsymbol{G}\boldsymbol{c})(i), \ \  i\in{\mathbb{S}}.
\end{equation}

In finance, the function $\boldsymbol{c}$ is usually regarded as a cost (or reward) function, and the potential $\boldsymbol\phi$ has the interpretation of the expected total cost (or reward); see, \cite{B07,SYL17}.
In biology, it has been linked to total food consumption and toxin production by bacteria \cite{P67}, as well as to the overall cost of epidemics \cite{J72}.
The exact expressions for the potential $\boldsymbol{\phi}$ and the Green matrix $\boldsymbol{G}$ have been investigated for several Markov chains.
For example, Ramakrishnan \cite{R84} first investigated the potential of finitely additive Markov chains.
Hernández-Suárez and Castillo-Chavez \cite{HC99} then derived an expression for the potential of birth-death processes with absorbing states, under the condition that $c(i)=i$.
Later, Stefanov and Wang \cite{SS00} extended the result from \cite{HC99} to cases where $\boldsymbol{c}$ is eventually non-decreasing.
In subsequent work, Pollett \cite{P03} removed the assumption of absorbing states in birth-death processes from \cite{HC99, P03},
and also studied the potential of birth, death, and catastrophe processes with absorbing states.
For more recent developments in this area, please refer to \cite{SM17, CP19, DM21} and their respective references.

In this paper, we will focus on the potential $\boldsymbol\phi$ of two classic types of skip-free Markov chains, including upward skip-free Markov chains and downward skip-free Markov chains.
We call $\boldsymbol{X}$ an upward skip-free Markov chain if its transition probability matrix $\boldsymbol{P}$ satisfies $P(i,i+1)>0$ for all $i\geq0$, and $P(i,k)=0$ for all $k-i\geq2$, i.e.,
\begin{equation*}
 \boldsymbol{P}=\left (
 \begin{array}{ccccccc}
   P(0,0) & P(0,1) & 0 & 0&\cdots \\
   P(1,0) & P(1,1) & P(1,2) & 0 & \cdots \\
   P(2,0) & P(2,1) & P(2,2) &P(2,3) & \cdots\\
   P(3,0) & P(3,1) & P(3,2) &P(3,3) &\cdots  \\
   \vdots &\vdots &\vdots &\vdots &\ddots \\
   \end{array}\right ).
\end{equation*}
As the dual process of the upward skip-free Markov chains, the transition probability matrix $\boldsymbol{P}$ of the downward skip-free Markov chain satisfies $P(i,i-1)>0$ for all $i\geq1$,
and $P(i,k)=0$ for all $i-k\geq2$, i.e.,
\begin{equation*}
\boldsymbol{P}=\left (
 \begin{array}{ccccccc}
   P(0,0) &P(0,1) &P(0,2) &P(0,3)&\cdots \\
   P(1,0) & P(1,1) & P(1,2) &P(1,3) & \cdots \\
 0 & P(2,1) & P(2,2) &P(2,3) & \cdots\\
 0 & 0& P(3,2) &P(3,3) &\cdots  \\
   \vdots &\vdots &\vdots &\vdots &\ddots \\
   \end{array}\right ).
\end{equation*}

Skip-free Markov chains have attracted lots of attention due to their importance in practical applications, such as queueing theory, economic theory, population models,
chemical models, biology theory and other disciplines; see, e.g., \cite{A86,A03,BGR82} and the references therein.
On the theoretical side,  Abate and Whitt \cite{AW89} made a spectral analysis of skip-free Markov chain and gave the Laplace transform of the upward first hitting time.
Zhang et al. \cite{JPYP2015,MYZ22}  investigated the criteria for recurrence and  ergodicity of skip-free Markov chains.
Liu et al. \cite{JLY14,LLZ22} developed  Poisson's equation and central limit theorems for positive recurrent skip-free Markov chains.
In addition, there are some other studies on skip-free Markov chains,
such as the separation cutoff, maximal inequalities, scale functions and fluctuation theory, see \cite{MZZ16,J19,AV19,LPW24} and the references therein.

In previous work \cite{P03} , Pollet primarily utilized the minimal non-negativity of $\boldsymbol{\phi}$,
combined with the tridiagonal structure of the transition probability function of birth-death processes, and derived the expression for the potential of birth-death processes through iterative calculations.
However, when the transition structure of Markov chains becomes highly complex, Pollett's method approach is no longer applicable.
Consequently, to obtain explicit expressions for the potentials of skip-free Markov chains,
we will develop truncation approximation techniques for potential theory of general Markov chains.
These truncation techniques serve two main purposes: first, they are not constrained by the transition structure of the Markov chains when solving for the potential;
second, they provide a theoretical foundation for the numerical approximation of potentials.

The rest of this paper is organized into three sections:
Section \ref{sec2} presents the main results, including the truncation approximation theorems for the potential and the Green matrix,
as well as explicit expressions for the potentials of both upward and downward skip-free Markov chains.
Section \ref{sec3} illustrates these results with two examples: the $GI/M/1$ queue and the $M/G/1$ queue.
Finally, Section \ref{sec4} extends the main results from discrete-time skip-free Markov chains to continuous-time skip-free Markov chains.

\section{Main results}\label{sec2}

\subsection{General Markov chains}\label{subsec1}

We begin by introducing the truncation approximation method in the context of general DTMCs.
For an irreducible transition probability matrix $\boldsymbol{P}$, we first truncate the $(n+1)\times(n+1)$ northwest corner of $\boldsymbol{P}$ and denote it as $_{(n)}\boldsymbol{P}$, specifically,
\begin{equation*}
 {_{(n)}\boldsymbol{P}}=\left (
 \begin{array}{ccccccc}
   P(0,0) & P(0,1) & P(0,2) &\cdots  & P(0,n)\\
   P(1,0) & P(1,1) & P(1,2) & \cdots & P(1,n)  \\
   P(2,0) & P(2,1) & P(2,2) & \cdots & P(2,n) \\
     \vdots &\vdots  &\vdots &\ddots &\vdots\\
   P(n,0) & P(n,1)& P(n,2)  &\cdots & P(n,n)  \\
   \end{array}\right ).
\end{equation*}

Then, let ${_{(n)}\boldsymbol{X}}=\{{_{(n)}{X}_k}, k\geq 0\}$ be the DTMC on the state space $_{(n)}\mathbb{S}=\{0, 1, 2, \ldots, n\}$ with transition probability matrix $_{(n)}\boldsymbol{P}$,
and let ${_{(n)}\boldsymbol{c}}$ represent the truncation function, consisting of the first $n+1$ elements of $\boldsymbol{c}$.
The potential $_{(n)}\boldsymbol{\phi}$ with respect to the chain ${_{(n)}\boldsymbol{X}}$ and the function ${_{(n)}\boldsymbol{c}}$ is expressed as
\begin{equation*}
_{(n)}\phi(i)=\mathbb{E}_{i}\bigg[\sum_{k=0}^{\infty}{_{(n)}c}\big({_{(n)}X_k}\big)\bigg], \ \ i\in {_{(n)}\mathbb{S}}.
\end{equation*}
Since $_{(n)}\boldsymbol{P}$ is irreducible and non-stochastic, its Green matrix
\[
_{(n)}\boldsymbol{G} = \sum_{k=0}^{\infty} {_{(n)}\boldsymbol{P}}^k = \left({_{(n)}\boldsymbol{I}} - {_{(n)}\boldsymbol{P}}\right)^{-1}
\]
exists and is finite, where ${_{(n)}\boldsymbol{I}}$ is the $(n+1)$-dimensional identity matrix.
Therefore, according to Eq.(\ref{Phi-and-G}), the potential $_{(n)}\boldsymbol{\phi}$ also exists and is finite.
The following result describes the relationship between $_{(n)}\boldsymbol{\phi}$  and $\boldsymbol{\phi}$.

\begin{theorem}\label{theorem-general-mc}
Assume that the DTMC $\boldsymbol{X}$ is irreducible and the nonnegative function $\boldsymbol{c}$ on $\mathbb{S}$ is finite. Then, we have
\[\lim_{n\rightarrow\infty}{_{(n)}}\phi(i)\uparrow\phi(i),\ \ i\in\mathbb{S}.\]
\end{theorem}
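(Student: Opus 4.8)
The plan is to realize the truncated potential $_{(n)}\boldsymbol{\phi}$ probabilistically, as the cost accumulated by the \emph{original} chain $\boldsymbol{X}$ up to the moment it first leaves the truncation window, and then to let the window grow by monotone convergence. Write $\tau_n=\inf\{k\ge 1: X_k>n\}$ for the first exit time of $\boldsymbol{X}$ from $_{(n)}\mathbb{S}=\{0,1,\ldots,n\}$. The first and crucial step is to identify the powers of the northwest-corner truncation with taboo (restricted) transition probabilities of $\boldsymbol{X}$: for $i,j\in{}_{(n)}\mathbb{S}$ and $k\ge 0$,
\begin{equation*}
\big({}_{(n)}\boldsymbol{P}^k\big)(i,j)=\mathbb{P}_i\big(X_k=j,\ \tau_n>k\big).
\end{equation*}
I would prove this by induction on $k$, using that the $(n{+}1)\times(n{+}1)$ corner retains exactly the one-step transitions that stay in $\{0,1,\ldots,n\}$: the Markov property turns each matrix multiplication by $_{(n)}\boldsymbol{P}$ into conditioning on one further step that remains in the window, so a product of $k$ factors sums precisely over the trajectories of $\boldsymbol{X}$ confined to $\{0,1,\ldots,n\}$ at times $1,\ldots,k$.

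Granting this identity, I would combine it with the Green-matrix representation (\ref{Phi-and-G}) and Tonelli's theorem (all summands are nonnegative since $\boldsymbol{c}\ge\boldsymbol{0}$) to obtain
\begin{equation*}
{}_{(n)}\phi(i)=\sum_{k=0}^{\infty}\big({}_{(n)}\boldsymbol{P}^k\,{}_{(n)}\boldsymbol{c}\big)(i)=\sum_{k=0}^{\infty}\mathbb{E}_i\big[c(X_k)\,\indic\{\tau_n>k\}\big]=\mathbb{E}_i\Bigg[\sum_{k=0}^{\tau_n-1}c(X_k)\Bigg].
\end{equation*}
Monotonicity in $n$ is then immediate: enlarging the window can only delay the exit, so $\tau_n\le\tau_{n+1}$, and since $\boldsymbol{c}\ge\boldsymbol{0}$ the partial sums $\sum_{k=0}^{\tau_n-1}c(X_k)$ increase pathwise in $n$; taking $\mathbb{E}_i$ gives $_{(n)}\phi(i)\le{}_{(n+1)}\phi(i)$ for every $i\le n$.

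To identify the limit I would show that $\tau_n\uparrow\infty$ almost surely: for each trajectory and each fixed $k$ the quantity $\max_{0\le j\le k}X_j$ is finite (the chain lives in $\mathbb{S}$), so $\tau_n>k$ once $n$ is large, whence $\tau_n\to\infty$. Thus $\sum_{k=0}^{\tau_n-1}c(X_k)\uparrow\sum_{k=0}^{\infty}c(X_k)$, and the monotone convergence theorem yields
\begin{equation*}
\lim_{n\to\infty}{}_{(n)}\phi(i)=\mathbb{E}_i\Bigg[\sum_{k=0}^{\infty}c(X_k)\Bigg]=\phi(i),
\end{equation*}
with the convergence increasing, as claimed. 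The main obstacle I anticipate is the careful justification of the taboo-probability identity for $({}_{(n)}\boldsymbol{P}^k)$, since everything downstream—monotonicity and the passage to the limit—is routine once that identity and the divergence $\tau_n\uparrow\infty$ are in hand.
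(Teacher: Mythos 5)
Your proof is correct, but it takes a genuinely different route from the one in the paper. The paper's argument is algebraic: it extends ${_{(n)}}\boldsymbol{\phi}$, ${_{(n)}}\boldsymbol{c}$ and ${_{(n)}}\boldsymbol{P}$ by zeros to the full state space, observes that the extended data increase monotonically to $\boldsymbol{P}$ and $\boldsymbol{c}$, and then invokes a comparison/convergence theorem for minimal nonnegative solutions of equations of the form $\boldsymbol{x}=\boldsymbol{A}\boldsymbol{x}+\boldsymbol{b}$ (Theorem 2.7 of \cite{c04}). You instead work pathwise: the taboo-probability identity $\bigl({_{(n)}}\boldsymbol{P}^k\bigr)(i,j)=\mathbb{P}_i\bigl(X_k=j,\ \tau_n>k\bigr)$ is the standard interpretation of the substochastic northwest corner as the original chain killed on exiting $\{0,\dots,n\}$, your induction for it is sound, and it yields the clean representation ${_{(n)}}\phi(i)=\mathbb{E}_i\bigl[\sum_{k=0}^{\tau_n-1}c(X_k)\bigr]$; monotonicity of $\tau_n$ and $\tau_n\uparrow\infty$ (which holds simply because each finite trajectory segment is bounded) then give the claim by monotone convergence. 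Your approach is self-contained, avoids the external citation, and supplies a probabilistic meaning for the truncated potential that the paper never states; the paper's approach, on the other hand, is a purely formal manipulation of the fixed-point equations and therefore transfers verbatim to the Green-matrix statement (Corollary \ref{col-G}) and to the continuous-time case (Theorem \ref{theorem-general-ctmc}), where the killed-chain bookkeeping would have to be redone. One minor remark: irreducibility is not actually needed for your limit argument (it is what guarantees $\mathbb{E}_i[\tau_n]<\infty$ and hence finiteness of each ${_{(n)}}\phi(i)$, via the fact that every state $i\le n$ leads to some state above $n$), but since the theorem only asserts monotone convergence this costs you nothing.
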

\begin{proof}
It follows from Eq.(\ref{equation-1}) that $_{(n)}\boldsymbol{\phi}$ is the minimal nonnegative solution (also the unique solution) of the following equation:
\begin{equation}\label{proposition-1-3}
\left({_{(n)}\boldsymbol{I}} - {_{(n)}\boldsymbol{P}}\right){_{(n)}\boldsymbol{\phi}}={_{(n)}\boldsymbol{c}}.
\end{equation}

We now extend the potential \({_{(n)}\boldsymbol{\phi}}\) and the function \({_{(n)}\boldsymbol{c}}\) from \({_{(n)}\mathbb{S}}\) to \(\mathbb{S}\).
The extended potential ${_{(n)}}\boldsymbol{\check{\phi}}$ is defined as
\begin{equation}\label{n-phi}
{_{(n)}}\boldsymbol{\check{\phi}} := ({_{(n)}}\phi(0), {_{(n)}}\phi(1), \cdots, {_{(n)}}\phi(n), 0, 0, \cdots),
\end{equation}
and the extended function ${_{(n)}}\boldsymbol{\check{c}}$ is represented as
\begin{equation}\label{n-c}
{_{(n)}}\boldsymbol{\check{c}} := (c(0), c(1), \cdots, c(n), 0, 0, \cdots).
\end{equation}

Similarly, we define the extended transition probability matrix \({_{(n)}\boldsymbol{\check{P}}}\) as follows
\begin{equation}\label{n-P}
{_{(n)}\boldsymbol{\check{P}}} := \begin{pmatrix}
   P(0,0) & P(0,1) & \cdots &P(0,n) & 0 & 0 & \cdots \\
   P(1,0) & P(1,1) & \cdots &P(1,n) & 0 & 0 & \cdots \\
   \vdots & \vdots & \ddots & \vdots & \vdots & \vdots & \cdots \\
   P(n,0) & P(n,1) & \cdots & P(n,n) & 0 & 0 & \cdots \\
   0 & 0 & \cdots & 0 & 0 & 0 & \cdots \\
   0 & 0 & \cdots & 0 & 0 & 0 & \cdots \\
   \vdots & \vdots & \vdots & \vdots & \vdots & \vdots & \ddots \\
\end{pmatrix}
\end{equation}
According to Eqs.(\ref{n-phi})--(\ref{n-P}), we rewrite Eq.(\ref{proposition-1-3}) as
\begin{equation}\label{proposition-1-4}
{_{(n)}}\boldsymbol{\check{\phi}}={_{(n)}\boldsymbol{\check{P}}}{{_{(n)}}\boldsymbol{\check{\phi}}}+{_{(n)}}\boldsymbol{\check{c}}
\end{equation}
Furthermore, we rewrite Eq.(\ref{equation-1}) as
\begin{equation}\label{proposition-1-5}
\boldsymbol{\phi}=\boldsymbol{P}\boldsymbol{\phi}+\boldsymbol{{c}}.
\end{equation}

It is evident that
\begin{equation}\label{proposition-1-6}
\lim_{n\rightarrow\infty} {_{(n)}\boldsymbol{\check{P}}}\uparrow\boldsymbol{P} \quad \text{and} \quad \lim_{n\rightarrow\infty} {_{(n)}}\boldsymbol{\check{c}} \uparrow \boldsymbol{c}.
\end{equation}
Therefore, by applying Eqs.(\ref{proposition-1-4})--(\ref{proposition-1-6}) and Theorem 2.7 from \cite{c04}, we conclude that
\begin{equation*}\label{proposition-1-7}
\lim_{n\rightarrow\infty} {{_{(n)}}\boldsymbol{\check{\phi}}} \uparrow \boldsymbol{\phi},
\end{equation*}
from which that
\begin{equation*}\label{proposition-1-7}
\lim_{n\rightarrow\infty} {{_{(n)}}{{\phi}(i)}} \uparrow {\phi(i)}, \ \ i\in{\mathbb{S}}.
\end{equation*}
This completes the proof of the theorem.
\end{proof}

Using a similar technique as in Theorem \ref{theorem-general-mc}, we can also establish the convergence theorem for the Green matrices \({_{(n)}\boldsymbol{G}}\) and \(\boldsymbol{G}\).

\begin{corollary}\label{col-G}
Assume that the DTMC $\boldsymbol{X}$ is irreducible. Then, we have
\[\lim_{n\rightarrow\infty}{_{(n)}G(i,j)}\uparrow G(i,j), \ \ i,j\in E. \]
\end{corollary}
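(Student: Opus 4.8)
The plan is to read each column of the Green matrix as a potential and then invoke Theorem \ref{theorem-general-mc} directly. Fix a column index $j\in\mathbb{S}$ and take as forcing function the indicator $\boldsymbol{e}_j$ of state $j$, that is, the nonnegative finite vector with $e_j(j)=1$ and $e_j(i)=0$ for $i\neq j$. Since $\boldsymbol{e}_j$ is finite and nonnegative, it is an admissible cost function, so the hypotheses of Theorem \ref{theorem-general-mc} are satisfied with $\boldsymbol{c}=\boldsymbol{e}_j$.

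The two identifications I would establish are the following. For the full chain, the Fubini computation of Eq.(\ref{Phi-and-G}) with $\boldsymbol{c}=\boldsymbol{e}_j$ gives
\[
\phi(i)=\mathbb{E}_i\bigg[\sum_{k=0}^{\infty}\indic\{X_k=j\}\bigg]=\sum_{k=0}^{\infty}P^k(i,j)=G(i,j),\qquad i\in\mathbb{S},
\]
so the potential associated with $\boldsymbol{e}_j$ is exactly the $j$-th column of $\boldsymbol{G}$. Running the same computation on the truncated chain ${_{(n)}\boldsymbol{X}}$ with the truncated cost ${_{(n)}\boldsymbol{e}_j}$, which for every $n\geq j$ is again the indicator of $j$ on ${_{(n)}\mathbb{S}}$, yields ${_{(n)}\phi(i)}={_{(n)}G(i,j)}$ for all $i\in{_{(n)}\mathbb{S}}$.

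Granting these two identities, the corollary follows at once: Theorem \ref{theorem-general-mc} applied with $\boldsymbol{c}=\boldsymbol{e}_j$ gives $\lim_{n\to\infty}{_{(n)}\phi(i)}\uparrow\phi(i)$, which the displays above rewrite as $\lim_{n\to\infty}{_{(n)}G(i,j)}\uparrow G(i,j)$; since $i$ and $j$ are arbitrary this is the claim. The genuine content is thus the recognition that columns of $\boldsymbol{G}$ are potentials, after which the result is immediate. The only point needing a word of care, and the closest thing to an obstacle, is the bookkeeping for the truncated forcing function: the quantities ${_{(n)}\phi}$ and ${_{(n)}G(\cdot,j)}$ are meaningful only once $j\in{_{(n)}\mathbb{S}}$, i.e.\ for $n\geq j$, but this is harmless since the limit is taken as $n\to\infty$. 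Both the monotonicity in $n$ and the finiteness of each ${_{(n)}G(i,j)}$ are already supplied by Theorem \ref{theorem-general-mc} and the discussion preceding it, so no additional estimates are required. Equivalently, one may run the proof of Theorem \ref{theorem-general-mc} directly on the matrix fixed-point equation $\boldsymbol{G}=\boldsymbol{P}\boldsymbol{G}+\boldsymbol{I}$, whose $j$-th column is precisely the Poisson equation $\boldsymbol{\phi}=\boldsymbol{P}\boldsymbol{\phi}+\boldsymbol{e}_j$.
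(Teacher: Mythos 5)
Your proposal is correct and follows essentially the same route as the paper: both rest on recognizing the $j$-th column of $\boldsymbol{G}$ as the potential for the indicator forcing function $\boldsymbol{e}_j$ (equivalently, as the minimal nonnegative solution of $\boldsymbol{g}=\boldsymbol{P}\boldsymbol{g}+\boldsymbol{e}_j$), the paper then re-running the truncation argument of Theorem \ref{theorem-general-mc} while you invoke that theorem directly as a black box. Your version is, if anything, slightly cleaner, and your remark about requiring $n\geq j$ for the truncated quantities to make sense is a correct and harmless bookkeeping point.
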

\begin{proof}
According to Eqs.(\ref{equation-1}) and (\ref{Phi-and-G}), we know that the Green matrix \(\boldsymbol{G}\) is the minimal nonnegative solution of following equation:
\begin{equation}\label{min-nonnegative-solution-of-G}
(\boldsymbol{I}-\boldsymbol{P})\boldsymbol{G}=\boldsymbol{I}.
\end{equation}

For any fixed state $i\in\mathbb{S}$, let $\boldsymbol{g_i}$ be the $(i+1){th}$-column of  the Green matrix \(\boldsymbol{G}\), i.e.,
\[\boldsymbol{g_i}=\left({{G(0,i)}},{{G(1,i)}},{{G(2,i)}},\cdots\right).\]
And let ${\boldsymbol{e_i}}$ be the function with one in the $(i+1){th}$ position, zeros elsewhere.
It follows from Eq.(\ref{min-nonnegative-solution-of-G}) that $\boldsymbol{g_i}$ is the minimal nonnegative solution of the following equation:
\[{\boldsymbol{g_i}}=\boldsymbol{P}{\boldsymbol{g_i}}+{\boldsymbol{e_i}}, \ \ i\in \mathbb{S}.\]

Similarly, let $_{(n)}\boldsymbol{g_i}$ be the $(i+1){th}$-column of the Green matrix \({_{(n)}\boldsymbol{G}}\) for $i\in {_{(n)}}\mathbb{S}$, i.e.,
\[_{(n)}\boldsymbol{g_i}=\left({_{(n)}{G(0,i)}},{_{(n)}{G(1,i)}},\cdots,{_{(n)}{G(n,i)}}\right).\]
Then,  $_{(n)}\boldsymbol{g_i}$ is the  minimal nonnegative solution of the following equation:
\[{_{(n)}\boldsymbol{g_i}}={_{(n)}\boldsymbol{P}}{_{(n)}\boldsymbol{g_i}}+{_{(n)}\boldsymbol{e_i}}, \ \ i\in {_{(n)}}\mathbb{S},\]
where ${_{(n)}\boldsymbol{e}_{i}}$ is the $(n+1)$-function with one in the $(i+1)th$ position, zeros elsewhere.

The rest of the proof is similar to Theorem \ref{theorem-general-mc}, which is omitted here.
\end{proof}

\subsection{Upward skip-free Markov chains}\label{subsec2}

To determine the potential $\boldsymbol{\phi}$ of the upward skip-free Markov chains, we need to introduce the following notations and lemma (see, e.g., \cite{JPYP2015}):
\begin{equation*}
{P_{n}^{(k-)}}=\sum_{i=0}^{k}P(n,i),\ \ 0\leq k<n,
\end{equation*}
and
\begin{equation*}
F_{i}^{(i)}=1, \ \ i\geq 0,\ \ F_{n}^{(i)}=\frac{1}{P(n,n+1)}\sum_{k=i}^{n-1}{P_{n}^{(k-)}}F_{k}^{(i)}, \ \ 0\leq i<n.
\end{equation*}

\begin{lemma}\label{lemma-upward}
Given a finite function $\boldsymbol{c}$ on $\mathbb{S}$, the sequence of functions $\{f(n), n \in \mathbb{N}\}$ is defined by
\[f(i)=\frac{c(i)}{P(i,i+1)}, \ \ f(n)=\frac{1}{P(n,n+1)}\left(\sum_{k=i}^{n-1}{P_{n}^{(k-)}}f(k)+c(n)\right),\ \ n\geq i+1,\]
has the following equivalent presentation:
\[f(n) =\sum_{k=i}^{n}\frac{F_{n}^{(k)}c(k)}{P(k,k+1)},\ \  n\geq i.\]
\end{lemma}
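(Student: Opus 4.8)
The plan is to prove the identity by induction on $n\ge i$, holding the base index $i$ fixed. This is the natural strategy because both the recursion defining $f$ and the defining recursion for $F_n^{(i)}$ build the value at level $n$ out of the values at all strictly lower levels $i,i+1,\ldots,n-1$, so an inductive hypothesis covering the entire range $i\le m\le n-1$ is exactly what the step will consume.

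For the base case $n=i$, the right-hand side reduces to the single summand $\dfrac{F_i^{(i)}c(i)}{P(i,i+1)}$, which equals $\dfrac{c(i)}{P(i,i+1)}=f(i)$ since $F_i^{(i)}=1$ by definition. For the inductive step, I assume the claimed formula holds for every $m$ with $i\le m\le n-1$ and substitute $f(k)=\sum_{j=i}^{k}\frac{F_k^{(j)}c(j)}{P(j,j+1)}$ into the defining recursion
\[
f(n)=\frac{1}{P(n,n+1)}\left(\sum_{k=i}^{n-1}{P_{n}^{(k-)}}f(k)+c(n)\right).
\]
This yields a double sum over the triangular index set $\{(k,j):i\le j\le k\le n-1\}$ together with the isolated term $c(n)/P(n,n+1)$.

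The key step is to interchange the order of summation, rewriting $\sum_{k=i}^{n-1}\sum_{j=i}^{k}=\sum_{j=i}^{n-1}\sum_{k=j}^{n-1}$, so that the inner sum becomes $\sum_{k=j}^{n-1}{P_{n}^{(k-)}}F_k^{(j)}$. By the very definition of $F_n^{(j)}$ for $j<n$, this inner sum equals $P(n,n+1)\,F_n^{(j)}$; hence the prefactor $1/P(n,n+1)$ cancels and the $j$-th block contributes precisely $\frac{F_n^{(j)}c(j)}{P(j,j+1)}$. Finally the leftover term $c(n)/P(n,n+1)$ is absorbed as the $j=n$ summand using $F_n^{(n)}=1$, giving exactly $f(n)=\sum_{j=i}^{n}\frac{F_n^{(j)}c(j)}{P(j,j+1)}$ and closing the induction. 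I expect the only genuine hurdle to be the summation swap and the careful matching of the index ranges so that the recursion for $F_n^{(j)}$ is recognized cleanly; everything surrounding it is routine cancellation.
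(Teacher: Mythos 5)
Your proof is correct: the strong induction on $n$ with the summation-order swap $\sum_{k=i}^{n-1}\sum_{j=i}^{k}=\sum_{j=i}^{n-1}\sum_{k=j}^{n-1}$ makes the inner sum $\sum_{k=j}^{n-1}P_n^{(k-)}F_k^{(j)}=P(n,n+1)F_n^{(j)}$ appear exactly as in the definition of $F_n^{(j)}$, and the leftover term is absorbed via $F_n^{(n)}=1$. The paper itself gives no proof of this lemma (it defers to the cited reference \cite{JPYP2015}), and your argument is the standard one that supplies the missing details; all index ranges ($j\le n-1<n$ for the $F$-recursion, $k<n$ for $P_n^{(k-)}$) check out.
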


\begin{theorem}\label{theorem-1}
Assume that $\boldsymbol{X}$ is an irreducible upward skip-free Markov chain and the nonnegative function $\boldsymbol{c}$ on $\mathbb{S}$ is finite.
Then, we have
 \begin{equation}\label{phi-single-birth}
  \phi(i)=\sum_{m=i}^{\infty}\sum_{k=0}^{m}\frac{F_{m}^{(k)}c(k)}{P(k,k+1)}, \ \ i \in\mathbb{S},
 \end{equation}
and this is finite if and only if $\phi(0)<\infty$.
\end{theorem}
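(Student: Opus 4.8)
The plan is to reduce Poisson's equation to a first-order recursion for the successive differences of $\boldsymbol{\phi}$, identify those differences with the function $f$ of Lemma~\ref{lemma-upward}, and then recover $\boldsymbol{\phi}$ itself through a truncation argument resting on Theorem~\ref{theorem-general-mc}. Writing Eq.(\ref{equation-1}) row by row and using $\sum_{j=0}^{i+1}P(i,j)=1$ for the upward skip-free chain, the equation at state $i$ rearranges as
\[
\sum_{j=0}^{i+1}P(i,j)\big(\phi(i)-\phi(j)\big)=c(i).
\]
Introducing the differences $g(i):=\phi(i)-\phi(i+1)$ and substituting $\phi(i)-\phi(j)=-\sum_{l=j}^{i-1}g(l)$ for $j<i$, a short interchange of the order of summation collapses the resulting double sum into the cumulative coefficients $P_i^{(l-)}=\sum_{j=0}^{l}P(i,j)$, and after moving one term across the equality we obtain
\[
g(i)=\frac{1}{P(i,i+1)}\Big(c(i)+\sum_{l=0}^{i-1}P_i^{(l-)}g(l)\Big),\qquad g(0)=\frac{c(0)}{P(0,1)}.
\]
This is exactly the recursion defining $f$ in Lemma~\ref{lemma-upward} with starting index $0$, so that lemma yields the closed form $g(i)=\sum_{k=0}^{i}F_i^{(k)}c(k)/P(k,k+1)$. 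The essential point is that $g$ is pinned down by $\boldsymbol{c}$ and $\boldsymbol{P}$ alone, independently of the (a priori unknown) constant $\phi(0)$.

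To pass from the differences back to $\boldsymbol{\phi}$, I would not sum the telescoping series directly (that would force a separate argument that $\phi(m)\to0$), but instead work with the truncated chain ${_{(n)}}\boldsymbol{X}$. Repeating the computation above for ${_{(n)}}\boldsymbol{\phi}$, the rows $0\le i<n$ are identical and again give ${_{(n)}}\phi(i)-{_{(n)}}\phi(i+1)=g(i)=f(i)$. The decisive ingredient is the bottom row $i=n$, where $\sum_{j=0}^{n}P(n,j)=1-P(n,n+1)$; rearranging Poisson's equation there produces an extra diagonal term and gives
\[
P(n,n+1)\,{_{(n)}}\phi(n)+\sum_{j=0}^{n-1}P(n,j)\big({_{(n)}}\phi(n)-{_{(n)}}\phi(j)\big)=c(n),
\]
which, after the same summation interchange, simplifies to ${_{(n)}}\phi(n)=f(n)$. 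Telescoping upward then yields the clean identity ${_{(n)}}\phi(i)=\sum_{m=i}^{n}f(m)$ for every $0\le i\le n$.

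Finally I would invoke Theorem~\ref{theorem-general-mc}: since ${_{(n)}}\phi(i)\uparrow\phi(i)$ and every term $f(m)=\sum_{k=0}^{m}F_m^{(k)}c(k)/P(k,k+1)$ is nonnegative (the $F_m^{(k)}$ are built by a nonnegative recursion and $c\ge0$), we get $\phi(i)=\lim_{n\to\infty}\sum_{m=i}^{n}f(m)=\sum_{m=i}^{\infty}\sum_{k=0}^{m}F_m^{(k)}c(k)/P(k,k+1)$, which is Eq.(\ref{phi-single-birth}). The finiteness criterion follows immediately: as $\phi(i)=\sum_{m=i}^{\infty}f(m)$ is a sum of nonnegative terms, it is nonincreasing in $i$, so $\phi(i)\le\phi(0)$ for all $i$ and $\boldsymbol{\phi}$ is finite everywhere if and only if $\phi(0)<\infty$. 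I expect the main obstacle to be the bottom-row computation: verifying that the substochastic deficit $P(n,n+1)$ supplies precisely the missing diagonal term that forces ${_{(n)}}\phi(n)=f(n)$, together with a careful check of the summation interchange, is where the skip-free structure and the truncation are both indispensable.
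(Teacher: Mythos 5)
Your proposal is correct and follows essentially the same route as the paper's proof: truncate, set ${_{(n)}}\phi(n+1)=0$ so the substochastic deficit $P(n,n+1)$ in the bottom row closes the difference recursion, identify the differences via Lemma~\ref{lemma-upward}, telescope, and pass to the limit with Theorem~\ref{theorem-general-mc}. Your handling of the bottom row and of the monotonicity (the potential is non\emph{increasing} in $i$) is in fact stated more carefully than in the paper, but the argument is the same.
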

\begin{proof}
We first calculate the potential $_{(n)}\boldsymbol{\phi}$ of the upward skip-free Markov chain $_{(n)}\boldsymbol{X}$
with transition probability matrix $_{(n)}\boldsymbol{P}$, which is given by
\begin{equation*}
 {_{(n)}\boldsymbol{P}}=\left (
 \begin{array}{ccccccc}
   P(0,0) & P(0,1) & 0&\cdots  & 0\\
   P(1,0) & P(1,1) & P(1,2) & \cdots &0 \\
   \vdots &\vdots  &\vdots &\ddots &\vdots\\
   P(n-1,0) & P(n-1,1) & P(n-1,2) & \cdots & P(n-1,n) \\
   P(n,0) & P(n,1)& P(n,2)  &\cdots & P(n,n)  \\
   \end{array}\right ).
\end{equation*}

According to Eq.(\ref{proposition-1-3}) and the structure of $_{(n)}\boldsymbol{P}$, we have
\begin{equation*}\label{single-birth-1}
{_{(n)}}\phi(i)-\sum_{j=0}^{i+1} P(i,j) {_{(n)}}\phi(j)=c(i), \ \  0\leq i\leq n-1,
\end{equation*}
and
\begin{equation*}\label{single-birth-1}
{_{(n)}}\phi(n)-\sum_{j=0}^{n} P(n,j) {_{(n)}}\phi(j)=c(n).
\end{equation*}

Now, we set ${_{(n)}}\phi_{n,0}(n)=0$, and let $\Delta(i)={_{(n)}}\phi(i)-{_{(n)}}\phi(i+1)$, $0\leq i \leq n$.
Hence, we rewrite the above equations as
\begin{equation*}
P(0,1)\Delta(0)=c(0)
\end{equation*}
and
\begin{equation*}
P(i,i+1)\Delta(i)-\sum_{k=0}^{i-1}{P}_{i}^{(k-)}\Delta(k) = c(i),\  \  1\leq i\leq n.
\end{equation*}
By simple calculation, we obtain
\begin{equation}\label{theorem-1-1}
\Delta(0)=\frac{c(0)}{P(0,1)},\ \ \Delta(i)=\frac{1}{P(i,i+1)}\left(c(i)+\sum_{k=0}^{i-1}{P}_{i}^{(k-)}\Delta_k \right) ,\  \  1\leq i\leq n.
\end{equation}
It follows from Eq.(\ref{theorem-1-1})  and Lemma \ref{lemma-upward} that
\begin{equation}\label{theorem-1-3}
  {_{(n)}}\phi(i)-{_{(n)}}\phi(i+1)=\Delta(i)=\sum_{k=0}^{i}\frac{F_{i}^{(k)}c(k)}{P(k,k+1)}, \ \ 0\leq i \leq n.
\end{equation}
Summing Eq.(\ref{theorem-1-3}) from $i$ to $n$  gives
\[{_{(n)}}\phi(i)=\sum_{m=i}^{n}\sum_{k=0}^{m}\frac{F_{m}^{(k)}c(k)}{P(k,k+1)}, \ \ 0\leq i \leq n.\]
From Theorem \ref{theorem-general-mc}, we have
\[\lim_{n\rightarrow\infty}{_{(n)}}\phi(i)=\lim_{n\rightarrow\infty}\sum_{m=i}^{n}\sum_{k=0}^{m}\frac{F_{m}^{(k)}c(k)}{P(k,k+1)}
=\sum_{m=i}^{\infty}\sum_{k=0}^{m}\frac{F_{n}^{(k)}c(k)}{P(k,k+1)}=\phi(i).\]

For a finite non-negative solution, we require $\phi(i)< \infty$ for each $i\in\mathbb{S}$.
From Eq.(\ref{phi-single-birth}), the potential $\boldsymbol\phi$ is a monotone non-decreasing function.
Thus, $\boldsymbol\phi$ is finite  if and only if $\phi(0)<\infty$.
We have proved the result.
\end{proof}

By Corollary \ref{col-G} and Theorem \ref{theorem-1}, we obtain the following result about the Green matrix of  upward skip-free Markov chains.

\begin{corollary}\label{G-upward-process}
Assume that $\boldsymbol{X}$ is an irreducible upward skip-free Markov chain.
Then, its Green matrix $\boldsymbol{G}$ is given by
\[G(i,j)=\frac{1}{P(j,j+1)}\sum_{m=\max\{i,j\}}^{\infty}F_m^{(j)}, \ \ i,j\in\mathbb{S}.\]
\end{corollary}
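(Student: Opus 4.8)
The plan is to read off the columns of the Green matrix directly from the potential formula of Theorem~\ref{theorem-1}, exploiting the identity $\boldsymbol{\phi}=\boldsymbol{G}\boldsymbol{c}$ from Eq.~(\ref{Phi-and-G}). Recall from the proof of Corollary~\ref{col-G} that, for a fixed state $j$, the $(j+1)$th column $\boldsymbol{g_j}$ of $\boldsymbol{G}$ is exactly the potential associated with the forcing function $\boldsymbol{e_j}$, the indicator of state $j$; indeed $\boldsymbol{g_j}=\boldsymbol{P}\boldsymbol{g_j}+\boldsymbol{e_j}$. Since $\boldsymbol{e_j}$ is nonnegative and finite, Theorem~\ref{theorem-1} applies with $\boldsymbol{c}=\boldsymbol{e_j}$, so I would simply substitute $c(k)=\delta_{kj}$ into the explicit expression and simplify.

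First I would carry out this substitution at the truncated level, where finiteness is automatic. From the proof of Theorem~\ref{theorem-1}, the truncated potential is
\[
{_{(n)}}\phi(i)=\sum_{m=i}^{n}\sum_{k=0}^{m}\frac{F_{m}^{(k)}c(k)}{P(k,k+1)},\qquad 0\le i\le n,
\]
and taking ${_{(n)}}\boldsymbol{c}={_{(n)}}\boldsymbol{e_j}$ turns ${_{(n)}}\phi(i)$ into the $(j+1)$th column entry ${_{(n)}}G(i,j)$. The inner sum then collapses: the only surviving term is $k=j$, and it contributes precisely when $j\le m$. Hence
\[
{_{(n)}}G(i,j)=\frac{1}{P(j,j+1)}\sum_{\substack{i\le m\le n\\ j\le m}}F_{m}^{(j)}=\frac{1}{P(j,j+1)}\sum_{m=\max\{i,j\}}^{n}F_{m}^{(j)},
\]
where the lower limit $\max\{i,j\}$ arises from combining the constraint $m\ge i$ of the outer sum with the constraint $m\ge j$ forced by the indicator; this is also consistent with $F_{m}^{(j)}$ being defined only for $m\ge j$, with $F_{j}^{(j)}=1$.

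The final step is to pass to the limit. By Corollary~\ref{col-G} we have ${_{(n)}}G(i,j)\uparrow G(i,j)$, so letting $n\to\infty$ in the displayed identity yields
\[
G(i,j)=\frac{1}{P(j,j+1)}\sum_{m=\max\{i,j\}}^{\infty}F_{m}^{(j)},\qquad i,j\in\mathbb{S},
\]
as claimed. I do not expect a serious obstacle, since both ingredients are already in hand; the only point requiring care is the bookkeeping of the summation range, namely that $\boldsymbol{c}=\boldsymbol{e_j}$ both selects the single term $k=j$ and simultaneously shifts the effective lower limit of the outer sum from $i$ to $\max\{i,j\}$. Alternatively, one could bypass the truncation and substitute $\boldsymbol{c}=\boldsymbol{e_j}$ directly into Eq.~(\ref{phi-single-birth}), with Corollary~\ref{col-G} serving only to confirm that the resulting (possibly infinite) series is the correct value of $G(i,j)$.
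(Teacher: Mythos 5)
Your proof is correct and takes essentially the same route the paper intends: the paper gives no separate argument for this corollary beyond citing Corollary~\ref{col-G} and Theorem~\ref{theorem-1}, and your substitution of $\boldsymbol{c}=\boldsymbol{e_j}$ into the potential formula (using the identification of the columns of $\boldsymbol{G}$ as potentials from the proof of Corollary~\ref{col-G}) is exactly the computation being left to the reader. The bookkeeping producing the lower limit $\max\{i,j\}$ is handled correctly.
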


\begin{remark} \label{remark-transient-upward}
From Corollary \ref{G-upward-process}, we know that the upward skip-free Markov chain  is transient if and only if
\[\sum_{n=0}^{\infty}F_{n}^{(0)}<\infty.\]
More results on recurrence of upward skip-free processes can be found in  \cite{JPYP2015}.
\end{remark}

\subsection{Downward skip-free Markov chains}\label{subsec3}

Similar to handling upward skip-free Markov chains, we introduce the follow notation and the lemma
to better derive the potential $\boldsymbol{\phi}$ of downward skip-free Markov chains (see, e.g., \cite{MYZ22}):
\[
P_{m}^{(k+)}=\sum_{i=k}^{\infty}P(m,i),\ \ k>m\geq0,
\]
and
\[
H_{i}^{(i)}=1, \ \ H_{m}^{(i)}=\frac{1}{P(m,m-1)}\sum_{k=m+1}^{i}P_{m}^{(k+)}H_{k}^{(i)}, \ 1\leq m<i.
\]
\begin{lemma}\label{lemma-single-death}
Given a finite function $\boldsymbol{c}$ on $\mathbb{S}$, the sequence of functions $\{h(n), n \in \mathbb{N}\}$ is defined by
\[h(n+1)=0, \ \ h(i)=\frac{1}{P(i,i-1)}\left(\sum_{k=i+1}^{n}P_{i}^{(k+)}h(k)+c(i)\right),\ \ i\leq n,\]
has the following equivalent presentation:
\[h(i) =\sum_{k=i}^{n}\frac{H_{i}^{(k)}c(k)}{P(k,k-1)},\ \  i\leq n.\]
\end{lemma}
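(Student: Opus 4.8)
The plan is to prove the identity by strong downward induction on the index $i$, treating $n$ as a fixed parameter. Since both the defining recursion for $h$ and the claimed closed form involve only finite sums ranging from $i$ (or $i+1$) up to $n$, there are no convergence issues and the statement is a purely algebraic identity; the finiteness of $\boldsymbol{c}$ merely guarantees that every term is well defined. This mirrors the upward case in Lemma~\ref{lemma-upward}, with the roles of $P(k,k+1)$, $F$, and $P_n^{(k-)}$ now played by $P(k,k-1)$, $H$, and $P_m^{(k+)}$, and with the recursion unrolling from the top index $n$ downward rather than from $i$ upward.

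First I would establish the base case $i=n$. Here the defining sum $\sum_{k=n+1}^{n}$ is empty, so the recursion gives $h(n)=c(n)/P(n,n-1)$; on the other side the closed form collapses to the single term $H_n^{(n)}c(n)/P(n,n-1)$, and since $H_n^{(n)}=1$ the two agree. For the inductive step I would fix $i<n$, assume the closed form holds for every index $k$ with $i<k\le n$, and substitute it into the recursion:
\[
h(i)=\frac{1}{P(i,i-1)}\left(\sum_{k=i+1}^{n}P_i^{(k+)}\sum_{l=k}^{n}\frac{H_k^{(l)}c(l)}{P(l,l-1)}+c(i)\right).
\]
The isolated term $c(i)$ matches the $l=i$ contribution of the target sum because $H_i^{(i)}=1$, so it remains only to handle the double sum.

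The crux is to interchange the order of summation. For fixed $l$ in the range $i+1\le l\le n$, the inner index $k$ runs over $i+1\le k\le l$, so the double sum rewrites as $\sum_{l=i+1}^{n}\frac{c(l)}{P(l,l-1)}\sum_{k=i+1}^{l}P_i^{(k+)}H_k^{(l)}$. Comparing the coefficient of $c(l)/P(l,l-1)$ with the target, I must verify that
\[
H_i^{(l)}=\frac{1}{P(i,i-1)}\sum_{k=i+1}^{l}P_i^{(k+)}H_k^{(l)},\qquad 1\le i<l,
\]
which is exactly the defining recursion of $H_i^{(l)}$. This identification closes the induction. I expect the only delicate point to be the bookkeeping of the summation limits in the interchange, namely describing the triangular region $\{(k,l):i+1\le k\le l\le n\}$ consistently from both directions, rather than any analytic difficulty; once the limits are pinned down, the definition of $H$ completes the argument immediately.
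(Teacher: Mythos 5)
Your proof is correct. The paper itself states this lemma without proof (deferring to the reference \cite{MYZ22}), so there is no in-paper argument to compare against; your downward strong induction on $i$ --- base case $i=n$ via $H_n^{(n)}=1$, then interchanging the summation over the triangle $\{(k,l): i+1\le k\le l\le n\}$ and recognizing $\sum_{k=i+1}^{l}P_i^{(k+)}H_k^{(l)}=P(i,i-1)H_i^{(l)}$ as the defining recursion of $H$ --- is the natural and complete argument, and it mirrors the unstated proof of the dual Lemma~\ref{lemma-upward}. The only cosmetic remark is that the statement should really be read over $1\le i\le n$, since $P(0,-1)$ and $H_0^{(k)}$ are undefined; your induction implicitly lives in that range, which is exactly how the lemma is invoked in the proof of Theorem~\ref{theorem-2}.
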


\begin{theorem}\label{theorem-2}
Assume that $\boldsymbol{X}$ is an irreducible downward skip-free Markov chain and the nonnegative function $\boldsymbol{c}$ on $\mathbb{S}$ is finite.
Then, we have
 \begin{equation}\label{phi-single-death}
  \phi(i)=\delta M(i)-\sum_{m=i+1}^{\infty}\sum_{k=m}^{\infty}\frac{H_m^{(k)}c(k)}{P(k,k-1)}, \ \ i \in\mathbb{S},
 \end{equation}
 where
 \[\delta=\sum_{i=1}^{\infty} P_{0}^{(i+)}\sum_{k=i}^{\infty}\frac{H_i^{(k)}c(k)}{P(k,k-1)}+c(0),\ \ M(i)=\lim_{n\rightarrow\infty}\frac{\sum_{m=i+1}^{n}H_m^{(n)}}{\sum_{i=1}^{n} P_{0}^{(i+)}H_i^{(n)}},\]
and this is finite if and only if $\phi(0)<\infty$.
\end{theorem}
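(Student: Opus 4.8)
The plan is to follow the template of Theorem~\ref{theorem-1}: obtain a closed form for the potential \({_{(n)}}\phi\) of the truncated chain \({_{(n)}\boldsymbol{X}}\), and then pass to the limit via Theorem~\ref{theorem-general-mc}, which supplies \({_{(n)}}\phi(i)\uparrow\phi(i)\). Abbreviate \(\psi:={_{(n)}}\phi\) and set \(h(l):=\psi(l)-\psi(l-1)\). The structural feature that separates this case from the upward one is that every row of a downward chain has an infinite tail, so truncation removes the mass \(P_i^{((n+1)+)}:=\sum_{j\ge n+1}P(i,j)\) from \emph{every} row rather than only the last; tracking this escaped mass is exactly what will generate the extra homogeneous term \(\delta M(i)\).

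First I would write the interior equations \(\psi(i)-\sum_{j=i-1}^{n}P(i,j)\psi(j)=c(i)\) for \(1\le i\le n\) together with the equation at state \(0\). An Abel summation based on \(\psi(j)=\psi(i)+\sum_{i<l\le j}h(l)\), the row-sum identity \(\sum_{j\ge i-1}P(i,j)=1\), and \(\sum_{l=i+1}^{n}h(l)=\psi(n)-\psi(i)\) collapses the truncated tails and turns the interior equations into
\[
P(i,i-1)h(i)=\sum_{l=i+1}^{n}P_i^{(l+)}h(l)+c(i)-\psi(n)\,P_i^{((n+1)+)},\qquad 1\le i\le n,
\]
while the same computation at state \(0\) produces the single scalar constraint
\[
P_0^{((n+1)+)}\psi(n)-\sum_{l=1}^{n}P_0^{(l+)}h(l)=c(0).
\]
Since the difference transform is invertible, solving these relations is equivalent to solving the truncated Poisson equation \eqref{proposition-1-3}, which has a unique solution.

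The heart of the argument is to solve the interior recursion. By linearity I would split \(h=h^{(c)}-\psi(n)\,h^{(e)}\), where \(h^{(c)}\) is forced by \(c\) and \(h^{(e)}\) by \(P_\cdot^{((n+1)+)}\). Lemma~\ref{lemma-single-death} gives \(h^{(c)}(i)=\sum_{k=i}^{n}\frac{H_i^{(k)}c(k)}{P(k,k-1)}\). For \(h^{(e)}\), writing \(P_i^{((n+1)+)}=P_i^{((n+1)+)}H_{n+1}^{(n+1)}\) reveals that its recursion is precisely the defining recursion of \(H_i^{(n+1)}\); by uniqueness of the downward recursion, \(h^{(e)}(i)=H_i^{(n+1)}\) for \(1\le i\le n\). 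Substituting into the scalar constraint and using \(H_{n+1}^{(n+1)}=1\) to absorb the stray \(P_0^{((n+1)+)}\) as the \(l=n+1\) term, I obtain
\[
\psi(n)=\frac{c(0)+\sum_{l=1}^{n}P_0^{(l+)}\sum_{k=l}^{n}\frac{H_l^{(k)}c(k)}{P(k,k-1)}}{\sum_{l=1}^{n+1}P_0^{(l+)}H_l^{(n+1)}},
\]
and then \(\psi(i)=\psi(n)-\sum_{l=i+1}^{n}h(l)\) reconstructs
\[
{_{(n)}}\phi(i)=\psi(n)\sum_{l=i+1}^{n+1}H_l^{(n+1)}-\sum_{m=i+1}^{n}\sum_{k=m}^{n}\frac{H_m^{(k)}c(k)}{P(k,k-1)}.
\]

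Finally I would let \(n\to\infty\). By Theorem~\ref{theorem-general-mc} the left side increases to \(\phi(i)\); on the right the numerator of \(\psi(n)\) tends to \(\delta\), the ratio \(\frac{\sum_{l=i+1}^{n+1}H_l^{(n+1)}}{\sum_{l=1}^{n+1}P_0^{(l+)}H_l^{(n+1)}}\) is exactly \(M(i)\) read at index \(n+1\), and the double sum converges to \(\sum_{m=i+1}^{\infty}\sum_{k=m}^{\infty}\frac{H_m^{(k)}c(k)}{P(k,k-1)}\); this yields \eqref{phi-single-death}. For the dichotomy, as in Theorem~\ref{theorem-1} irreducibility forces \(\phi\) to be finite at one state iff finite everywhere: for each \(i\) there is \(m\) with \(P^m(0,i)>0\), whence \(G(i,j)\le G(0,j)/P^m(0,i)\) and \(\phi(i)\le\phi(0)/P^m(0,i)\), so finiteness is equivalent to \(\phi(0)<\infty\). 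The main obstacle throughout is the per-row escaped mass: the decisive point is that its aggregate effect is controlled exactly by the quantities \(H_i^{(n+1)}\), which is what converts the boundary constraint at \(0\) into the limiting ratio \(M(i)\).
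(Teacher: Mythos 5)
Your proposal follows essentially the same route as the paper's proof: the same difference transform $\Theta(i)={_{(n)}}\phi(i)-{_{(n)}}\phi(i-1)$, the same linear split of the recursion into a part forced by $\boldsymbol{c}$ (solved via Lemma~\ref{lemma-single-death}) and a part forced by the escaped mass $P_\cdot^{((n+1)+)}$ (identified with $H_\cdot^{(n+1)}$ by matching recursions), the same boundary equation at state $0$ yielding ${_{(n)}}\phi(n)$, and the same passage to the limit via Theorem~\ref{theorem-general-mc}. The only deviation is your finiteness dichotomy, which uses the general irreducibility bound $\phi(i)\le \phi(0)/P^m(0,i)$ instead of the paper's term-by-term comparison of $\delta$, $M(i)$, and the tail sums; both are valid.
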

\begin{proof}
We first truncate the downward skip-free transition probability matrix of the downward skip-free Markov chain,  and $_{(n)}\boldsymbol{P}$ is given by
\begin{equation*}
 {_{(n)}\boldsymbol{P}}=\left (
 \begin{array}{ccccccc}
   P(0,0) & P(0,1) &\cdots & P(0,n-1) & P(0,n)\\
   P(1,0) & P(1,1) & \cdots & P(1,n-1) &P(1,n) \\
   \vdots &\vdots &\ddots  &\vdots &\vdots\\
   0 & 0& \cdots  & P(n-1,n-1) & P(n-1,n) \\
  0& 0&  \cdots& P(n,n-1)  & P(n,n)  \\
   \end{array}\right ).
\end{equation*}

It follows from $_{(n)}\boldsymbol{P}$ and Eq.(\ref{proposition-1-3}) that
\begin{equation}\label{single-death-1}
{_{(n)}}\phi(0)-\sum_{j=0}^{n} P(0,j) {_{(n)}}\phi(j)=c(0)
\end{equation}
and
\begin{equation}\label{single-death-2}
{_{(n)}}\phi(i)-\sum_{j=i-1}^{n} P(i,j) {_{(n)}}\phi(j)=c(i), \ \  1\leq i\leq n.
\end{equation}
Now, let  $\Theta(i)={_{(n)}}\phi(i)-{_{(n)}}\phi(i-1)$, $1\leq i \leq n$.
Eqs.(\ref{single-death-1}) and (\ref{single-death-2}) can be written as
\begin{equation}\label{theorem-2-1}
 \sum_{i=1}^{{n}} {P_{0}^{(i+)}}\Theta(i)-{P_{0}^{((n+1)+)}}{_{(n)}}\phi(n)+c(0)=0,
\end{equation}
and
\begin{equation}\label{theorem-2-2}
P(i,i-1)\Theta(i)=\sum_{k=i+1}^{{n}}P_{i}^{(k+)}\Theta(k)-{P_{i}^{((n+1)+)}}{_{(n)}}\phi(n)+c(i),\  \  1\leq i\leq n.
\end{equation}
According to Eq.(\ref{theorem-2-2}), we have
\begin{equation}\label{theorem-2-3}
\Theta(i)=\frac{1}{P(i,i-1)}\left(\sum_{k=i+1}^{n}P_{i}^{(k+)}\Theta(k) + c(i)-{P_{i}^{((n+1)+)}}{_{(n)}}\phi(n)\right),\  \  1\leq i\leq n.
\end{equation}
By Lemma \ref{lemma-single-death} and Eq.(\ref{theorem-2-3}), we obtain that
\begin{eqnarray}
\nonumber  \Theta(i) &=& \sum_{k=i}^{{n}}\frac{H_i^{(k)}\left(c({k})-{P_{k}^{((n+1)+)}}{_{(n)}}\phi(n)\right)}{P({k,k-1})} \\
 \label{theorem-2-5}  &=& \sum_{k=i}^{{n}}\frac{H_i^{(k)}c({k})}{P({k,k-1})} -\sum_{k=i}^{{n}}\frac{H_i^{(k)}{P_{k}^{((n+1)+)}}}{P({k,k-1})}{_{(n)}}\phi(n),\ \ 1\leq i\leq n.
\end{eqnarray}
Let $m(i)=\sum_{k=i}^{{n}}\frac{H_i^{(k)}{P_{k}^{((n+1)+)}}}{P({k,k-1})}$, $1\leq i\leq{n}$, and it follows from Lemma \ref{lemma-single-death} that
\begin{equation}\label{m(i)-1}
m(i)=\frac{1}{P(i,i-1)}\left(\sum_{k=i+1}^{n}P_{i}^{(k+)}m(k)+P_{i}^{((n+1)+)}\right),\ \ 1\leq i\leq{n}.
\end{equation}
According to the definition of $H_{i}^{(n)}$, we have
\begin{eqnarray}
\nonumber  H_{i}^{(n+1)} &=& \frac{1}{P(i,i-1)}\left(\sum_{k=i+1}^{n+1}P_{i}^{(k+)}H_{k}^{(n+1)}\right) \\
 \nonumber  &=& \frac{1}{P(i,i-1)}\left(\sum_{k=i+1}^{n}P_{i}^{(k+)}H_{k}^{(n+1)}+P_{i}^{((n+1)+)}H_{n+1}^{(n+1)}\right)\\
 \label{m(i)-2}  &=& \frac{1}{P(i,i-1)}\left(\sum_{k=i+1}^{n}P_{i}^{(k+)}H_{k}^{(n+1)}+P_{i}^{((n+1)+)}\right), \ \ 1\leq i\leq{n}.
\end{eqnarray}
Combining Eqs.(\ref{m(i)-1}) and (\ref{m(i)-2}), we obtain $m(i) = H_{i}^{(n+1)}$ for $1 \leq i \leq n$, and thus Eq.(\ref{theorem-2-5}) becomes
\begin{equation}\label{theorem-2-6-1}
\Theta(i)=\sum_{k=i}^{{n}}\frac{H_i^{(k)}c({k})}{P({k,k-1})} -H_{i}^{(n+1)}{_{(n)}}\phi(n), 1\leq i\leq n.
\end{equation}
By substituting Eq.(\ref{theorem-2-6-1}) into Eq.(\ref{theorem-2-1}), we have
\begin{equation*}
 \sum_{i=1}^{n} P_{0}^{(i+)}\sum_{k=i}^{n}\frac{H_i^{(k)}c(i)}{P(k,k-1)}-\sum_{i=1}^{{n+1}} P_{0}^{(i+)}H_i^{(n+1)}{_{(n)}}\phi(n)+c(0)=0.
\end{equation*}
Clearly,
\begin{equation}\label{theorem-2-6}
{{_{(n)}}\phi(n)}=\frac{\sum_{i=1}^{n} P_{0}^{(i+)}\sum_{k=i}^{n}\frac{H_i^{(k)}c(i)}{P(k,k-1)}+c(0)}{\sum_{i=1}^{{n+1}} P_{0}^{(i+)}G_i^{(n+1)}} .
\end{equation}
It follows from Eqs.(\ref{theorem-2-5}) and (\ref{theorem-2-6}) that
\begin{eqnarray*}\label{theorem-2-7}
  {_{(n)}}\phi(i) &=& \sum_{m=i+1}^{n+1}H_m^{(n+1)}\frac{\sum_{i=1}^{n} P_{0}^{(i+)}\sum_{k=i}^{n}\frac{H_i^{(k)}c(k)}{P(k,k-1)}+c(0)}{\sum_{i=1}^{{n+1}} P_{0}^{(i+)}H_i^{(n+1)}}-\sum_{m=i+1}^{n}\sum_{k=m}^{n}\frac{H_m^{(k)}c(k)}{P(k,k-1)} \\
 &=& \frac{\sum_{m=i+1}^{{n+1}}H_m^{(n+1)}}{\sum_{i=1}^{{n+1}} P_{0}^{(i+)}H_i^{(n+1)}}\left({\sum_{i=1}^{n} P_{0}^{(i+)}\sum_{k=i}^{n}\frac{H_i^{(k)}c(k)}{P(k,k-1)}+c(0)}\right)
-\sum_{m=i+1}^{n}\sum_{k=m}^{n}\frac{H_m^{(k)}c(k)}{P(k,k-1)}.
\end{eqnarray*}
Moreover, by applying Theorem \ref{theorem-general-mc} along with the equation above, we derive the potential $\boldsymbol{\phi}$ for downward skip-free Markov chains, as shown in Eq.(\ref{phi-single-death}).

Finally, we demonstrate that $\boldsymbol{\phi}$ is finite if and only if $\phi(0) < \infty$. The sufficiency is evident, so we focus on proving the necessity.
Assuming $\phi(0) < \infty$, we find that
$$\sum_{m=1}^{\infty}\sum_{k=m}^{\infty}\frac{H_m^{(k)}c(k)}{P(k,k-1)} < \infty.$$
Since $P_{0}^{(i+)} < 1$ for $i \geq 1$ and $c(0) < \infty$, it follows that
\begin{equation}\label{finite-1}
\delta=\sum_{i=1}^{\infty} P_{0}^{(i+)}\sum_{k=i}^{\infty}\frac{H_i^{(k)}c(k)}{P(k,k-1)}+c(0)<\sum_{m=1}^{\infty}\sum_{k=m}^{\infty}\frac{H_m^{(k)}c(k)}{P(k,k-1)}+c(0)<\infty.
\end{equation}
Furthermore, we have
$$M(0) = \lim_{n\rightarrow\infty}\frac{\sum_{m=1}^{n}H_m^{(n)}}{\sum_{i=1}^{n} P_{0}^{(i+)}H_i^{(n)}} < \infty.$$
Since $H_{m}^{(n)}$ for $1 \leq m < n$ is nonnegative and finite, it holds that
$$\frac{\sum_{m=i+1}^{{n}}H_m^{(n)}}{\sum_{i=1}^{{n}} P_{0}^{(i+)}H_i^{(n)}}\leq\frac{\sum_{m=1}^{n}H_m^{(n)}}{\sum_{i=1}^{n} P_{0}^{(i+)}H_i^{(n)}}, \ \ 1\leq i\leq n,$$
from which that
\begin{equation}\label{finite-2}
M(i)=\lim_{n\rightarrow\infty}\frac{\sum_{m=i+1}^{{n}}H_m^{(n)}}{\sum_{i=1}^{{n}} P_{0}^{(i+)}H_i^{(n)}}
\leq\lim_{n\rightarrow\infty}\frac{\sum_{m=1}^{n}H_m^{(n)}}{\sum_{i=1}^{n} P_{0}^{(i+)}H_i^{(n)}}=M(0)<\infty, \ \ i\geq 1.
\end{equation}
Similarly, since $\sum_{k=m}^{\infty}\frac{H_m^{(k)}c(k)}{P(k,k-1)}$ for $k \geq m$ is nonnegative and finite, we have
\begin{equation}\label{finite-3}
\sum_{m=i+1}^{\infty}\sum_{k=m}^{\infty}\frac{H_m^{(k)}c(k)}{P(k,k-1)}<\sum_{m=1}^{\infty}\sum_{k=m}^{\infty}\frac{H_m^{(k)}c(k)}{P(k,k-1)}<\infty,\ \ i\geq 1.
\end{equation}
Combining Eqs.(\ref{finite-1})--(\ref{finite-3}), we conclude that for any fixed $i \in \mathbb{S}$, $\phi(i) < \infty$ if $\phi(0) < \infty$. Thus, the proof is complete.
\end{proof}

By Corollary \ref{col-G} and Theorem \ref{theorem-2}, we can derive the Green matrix of  downward skip-free Markov chains.

\begin{corollary}\label{G-downward-process}
Assume that $\boldsymbol{X}$ is an irreducible downward skip-free Markov chain.
Then, its Green matrix $\boldsymbol{G}$ is given by
\begin{equation*}
G(i,j)=\left\{\aligned & M(i), &   j=0,\\
  &M(i)\sum_{k=1}^{j}\frac{P_0^{(k+)}H_k^{(j)}}{P(j,j-1)}, &  j\leq i,\\
  & M(i)\sum_{k=1}^{j}\frac{P_0^{(k+)}H_k^{(j)}}{P(j,j-1)}-\sum_{k=i+1}^{j}\frac{H_k^{(j)}}{P(j,j-1)}, & j>i. \endaligned \right.
\end{equation*}
\end{corollary}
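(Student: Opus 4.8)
The plan is to read off each column of the Green matrix as a potential with a point-mass forcing function, and then specialize the explicit formula of Theorem~\ref{theorem-2}. Concretely, recall from (\ref{Phi-and-G}) that $\boldsymbol\phi=\boldsymbol G\boldsymbol c$; hence if I take $\boldsymbol c=\boldsymbol e_j$, the indicator that equals $1$ at the single state $j$ and $0$ elsewhere, then $\phi(i)=(\boldsymbol G\boldsymbol e_j)(i)=G(i,j)$. Since $\boldsymbol e_j$ is a finite nonnegative function on $\mathbb S$, Theorem~\ref{theorem-2} applies verbatim, and the task reduces to substituting $c(k)=\mathbf 1_{\{k=j\}}$ into (\ref{phi-single-death}) and simplifying. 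Corollary~\ref{col-G} guarantees in addition that the resulting expression is the genuine monotone limit of the truncated Green matrices, so it remains valid as an identity in $[0,\infty]$ regardless of recurrence.

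First I would record the two elementary facts that drive every simplification: the quantity $M(i)$ in (\ref{phi-single-death}) depends only on the transition probabilities and not on $\boldsymbol c$, so it is untouched by the substitution; and in any sum of the form $\sum_{k}\frac{H_\bullet^{(k)}c(k)}{P(k,k-1)}$ the sifting property of the indicator leaves only the single term $k=j$, contributing $H_\bullet^{(j)}/P(j,j-1)$, provided $j$ lies in the summation range. The entire proof is then the bookkeeping of when $j$ falls inside the relevant index ranges.

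Next I would split into the three regimes that appear in the statement. When $j=0$ we have $c(0)=1$ while every inner sum $\sum_{k\ge i}$ has lower index $\ge1$, so all such sums vanish; thus $\delta=c(0)=1$ and the double sum in (\ref{phi-single-death}) vanishes, giving $G(i,0)=M(i)$. When $j\ge1$, the constant term $c(0)=0$, and the inner sum inside $\delta$ survives only when its lower index is $\le j$, collapsing $\delta$ to $\frac1{P(j,j-1)}\sum_{k=1}^{j}P_0^{(k+)}H_k^{(j)}$. The double sum $\sum_{m=i+1}^{\infty}\sum_{k=m}^{\infty}\frac{H_m^{(k)}c(k)}{P(k,k-1)}$ retains only the terms with $k=j$ and $m\le j$, i.e. $m$ running over $i+1\le m\le j$; this range is empty exactly when $j\le i$ and equals $\frac1{P(j,j-1)}\sum_{m=i+1}^{j}H_m^{(j)}$ when $j>i$. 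Combining $\delta\,M(i)$ with this double sum reproduces the middle line ($j\le i$) and the last line ($j>i$) of the corollary after relabelling $m\to k$.

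The step I expect to be most delicate is precisely this index-range bookkeeping: one must keep straight that the inner summation inside $\delta$ starts at the outer index (which forces the cutoff $\sum_{k=1}^{j}$), while the double sum is constrained simultaneously by $m\ge i+1$ and by $m\le j$ — it is the interaction of these two constraints that produces the dichotomy between $j\le i$ and $j>i$, and an off-by-one slip in either bound would misplace the boundary term. Once the ranges are pinned down, the convention $H_j^{(j)}=1$ and the definition of $P_0^{(k+)}$ make each surviving term well defined, and the three cases assemble into the claimed expression for $G(i,j)$.
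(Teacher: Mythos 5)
Your proposal is correct and follows exactly the route the paper intends: the paper states the corollary without proof, noting only that it follows from Corollary~\ref{col-G} and Theorem~\ref{theorem-2}, and your specialization of (\ref{phi-single-death}) to $\boldsymbol c=\boldsymbol e_j$ with the careful index bookkeeping is precisely that derivation. The three cases you obtain match the stated formula for $G(i,j)$.
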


\begin{remark} \label{remark-transient-downward}
From Corollary \ref{G-downward-process}, we know that the downward skip-free Markov chain  is transient if and only if $M(0)<\infty$, i.e.,
\[\lim_{n\rightarrow\infty}\frac{\sum_{m=1}^{n}H_m^{(n)}}{\sum_{i=1}^{n} P_{0}^{(i+)}H_i^{(n)}}<\infty.\]
The results on recurrence of downward skip-free Markov chains can be found in  \cite{MYZ22}.
\end{remark}

\section{Examples}\label{sec3}

In this section, we apply our results to $GI/M/1$ queues and $M/G/1$ queues.

\begin{example}\label{exp1}
Let $\boldsymbol X$ be an embedded Markov chain of the $GI/M/1$ queue and its transition probability matrix $\boldsymbol{P}$ is given by
\begin{equation*}\label{P-GI/M/1}
 \boldsymbol{P}=\left (
 \begin{array}{ccccccc}
   b_{0} & a_{0} & 0 & 0&\cdots \\
   b_{1} & a_{1} & a_{0} & 0 & \cdots \\
   b_{2} & a_{2} & a_{1} &a_{0} & \cdots\\
   b_{3} & a_{3} & a_{2} &a_{1} &\cdots  \\
   \vdots &\vdots &\vdots &\vdots &\ddots \\
   \end{array}\right ),
\end{equation*}
where $a_k>0$, and $b_k=1-\sum_{i=0}^{k}a_i$, $k\geq 0$.
\end{example}

It is clear that the embedded Markov chain of the $GI/M/1$ queue is irreducible and follows an upward skip-free Markov chain structure.
In this model, we take
$$a_k=\frac{z-1}{z^{k+1}} \quad \text{and} \quad  b_k=\frac{1}{z^{k+1}},\ \ k\geq 0,$$
where the constant $z>1$.
By calculations, we obtain
\begin{equation*}\label{GI-M-1-1}
P_n^{(k-)}=\frac{1}{z^{n-k+1}},\ \ 0\leq k < n,
\end{equation*}
and
\begin{equation}\label{GI-M-1-2}
F_n^{(i)}=\frac{1}{z(z-1)^{n-i}}, \ \ 0\leq i< n.
\end{equation}
Furthermore, according to Eq.(\ref{GI-M-1-2}), we have
\[\sum_{n=0}^{\infty}F_n^{(0)}=1+\frac{1}{z}\sum_{n=1}^{\infty}\frac{1}{(z-1)^n}.\]

It follows from Remark \ref{remark-transient-upward}  that this chain is transient when $z>2$, and recurrent when $1<z\leq 2$.
Here, we consider the potential $\boldsymbol\phi$ in the case of transient embedded Markov chain of the $GI/M/1$ queue.
Now, let $c(i)=z^{-i}$, $i\in \mathbb{S}$, where $z>2$.
From Eq.(\ref{GI-M-1-2}) and the function $\boldsymbol{c}$, we have

\begin{equation}\label{GI-M-1-3}
 \sum_{k=0}^{m}\frac{F_{m}^{(k)}c(k)}{P(k,k+1)} =\frac{z}{(z-1)^{m+1}}, \ \ m\geq0.
\end{equation}
Finally, according to Theorem \ref{theorem-1} and Eq.(\ref{GI-M-1-3}), we have
\[\phi(i)=\sum_{m=i}^{\infty}\sum_{k=0}^{m}\frac{F_{m}^{(k)}c(k)}{q_{k,k+1}}=\frac{z}{(z-2)(z-1)^i}, \ \ i\in\mathbb{S}.\]
\begin{figure}[h]
  \centering
  \includegraphics[width=13cm]{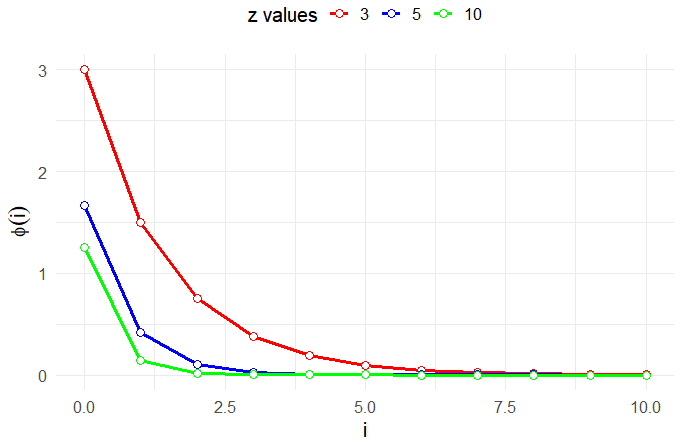}
  \caption{The value of the potential $\boldsymbol{\phi}$ with $z=3, 5, 10$ of Example \ref{exp1}. }
\end{figure}

\begin{example}\label{exp2}
Let $\boldsymbol X$ be an embedded Markov chain of the $M/G/1$ queue and its transition probability matrix $\boldsymbol{P}$ is given by
\begin{equation*}
\boldsymbol{P}=\left (
 \begin{array}{ccccccc}
   a_{0} & a_{1} & a_{2} & a_{3}&\cdots \\
   a_{0} & a_{1} & a_{2} & a_{3} & \cdots \\
   0 & a_{0} & a_{1} &a_{2} & \cdots\\
   0 & 0 & a_{1} &a_{2} &\cdots  \\
   \vdots &\vdots &\vdots &\vdots &\ddots \\
   \end{array}\right ),
\end{equation*}
where $a_k>0$, $k\geq 0$, and $\sum_{k=0}^{\infty}a_k=1$.
\end{example}

Obviously, the embedded Markov chain of the $M/G/1$ queue is an irreducible downward skip-free Markov chain.
Similar to Example \ref{exp1}, we take
\[a_k=\frac{z-1}{z^{k+1}},\ \ k\geq0,\]
where $z>1$. We first determine the recurrence of the chain.
By calculations, we obtain
\begin{equation}\label{mg1-1}
P_{n}^{(k+)}=\frac{1}{z^{k-n+1}},\ \ 1\leq n< k;\ \ \ P_{0}^{(k+)}=\frac{1}{z^{k}},\ \ k\geq1,
\end{equation}
and
\begin{equation}\label{mg1-2}
H_{n}^{(i)}=\frac{1}{z(z-1)^{i-n}},\ \ 1\leq n< i.
\end{equation}
Additionally, based on Eqs.(\ref{mg1-1}) and (\ref{mg1-2}), we obtain
\begin{equation}\label{mg1-3}
\sum_{k=1}^{n} P_{0}^{(k+)}H_k^{(n)}=\frac{1}{z(z-1)^{n-1}}, \ \ k\leq n,
\end{equation}
and
\begin{equation}\label{mg1-4}
\sum_{k=i+1}^{n}H_k^{(n)}=\frac{(z-1)^{n-i+1}-1}{z(z-2)(z-1)^{n-i-1}},\ \ n\geq i+1.
\end{equation}
If $1<z<2$, combining with Eqs.(\ref{mg1-3}) and (\ref{mg1-4}), we have
\begin{equation}\label{mg1-5}
M(i)=\lim_{n\rightarrow\infty}\frac{\sum_{k=i+1}^{n}H_k^{(n)}}{\sum_{k=1}^{n} P_{0}^{(k+)}H_k^{(n)}}=\lim_{n\rightarrow\infty}{\frac{(z-1)^{n-i+1}-1}{(z-2)(z-1)^{-i}}}={\frac{(z-1)^{i}}{2-z}},\ \ i\in\mathbb{S};
\end{equation}
if $z>2$,
\begin{equation}\label{mg1-6}
M(i)=\lim_{n\rightarrow\infty}{\frac{(z-1)^{n-i+1}-1}{(z-2)(z-1)^{-i}}}=+\infty,\ \ i\in\mathbb{S};
\end{equation}
For the case of $z=2$, Eqs.(\ref{mg1-3}) and (\ref{mg1-4}) becomes
\[\sum_{k=1}^{n} P_{0}^{(k+)}H_k^{(n)}=\frac{1}{2}, \ \ k\leq n.\]
and
\[\sum_{k=i+1}^{n}H_k^{(n)}=\frac{1}{2}(n-i+1),\ \ n\geq i+1,\]
Furthermore, we have
\begin{equation}\label{mg1-7}
M(i)=\lim_{n\rightarrow\infty}\frac{\frac{1}{2}(n-i+1)}{\frac{1}{2}}=+\infty ,\ \ i\in\mathbb{S}.
\end{equation}

Combining Remark \ref{remark-transient-downward} with Eqs.(\ref{mg1-5})--(\ref{mg1-7}),
it is established that the embedded Markov chain of the $M/G/1$ queue is transient when $1 < z < 2$ and recurrent when $z \geq 2$.
For the case where the chain is transient, we choose $c(0)=0$, and $c(i) = \left(\frac{z-1}{z}\right)^{i}$, $i \geq 1$.
By calculations, we have
\begin{equation}\label{M/G/1-0}
\sum_{k=m}^{\infty}\frac{H_m^{(k)}c(k)}{P(k,k-1)}=\frac{(z-1)^{m-2}}{z^m}+\left(\frac{z-1}{z}\right)^{m-1}, \ \ m\geq1.
\end{equation}
Based on Eq.(\ref{M/G/1-0}), we have
\begin{equation}\label{M/G/1-1}
\delta=\sum_{m=1}^{\infty} P_{0}^{(m+)}\sum_{k=m}^{\infty}\frac{H_m^{(k)}c(k)}{P_{k,k-1}}+c(0)=\frac{1}{z-1}
\end{equation}
and
\begin{equation}\label{M/G/1-2}
\sum_{m=i+1}^{\infty}\sum_{k=m}^{\infty}\frac{H_m^{(k)}c(k)}{P(k,k-1)}=\frac{(z^2-z+1)(z-1)^{i-1}}{z^i}, \ \  i \in\mathbb{S}.
\end{equation}
Finally, combining Theorem \ref{theorem-2} with Eqs.(\ref{mg1-5}) and (\ref{M/G/1-1})--(\ref{M/G/1-2}), we obtain that
 \begin{equation*}
  \phi(i)=\left(\frac{1}{2-z}-\frac{z^2-z+1}{z^i}\right)(z-1)^{i-1}, \ \ i \in\mathbb{S}.
 \end{equation*}
\begin{figure}[h]
  \centering
  \includegraphics[width=13cm]{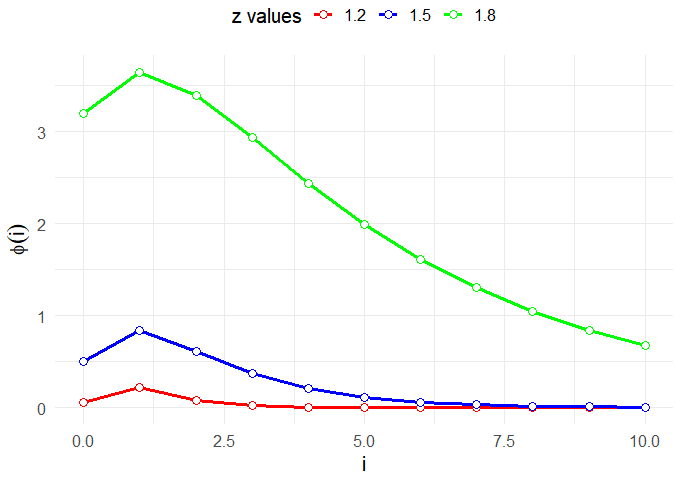}
  \caption{The value of the potential $\boldsymbol{\phi}$ with $z=1.2, 1.5, 1.8$ of Example \ref{exp2}. }\label{fig2}
\end{figure}

\begin{remark}
According to Theorem \ref{theorem-1}, when the potential of upward skip-free Markov chains is finite, it must be monotonically decreasing.
However, this is not necessarily the case for downward skip-free Markov chains, as demonstrated in Figure \ref{fig2}.
\end{remark}

\section{Continuous-time skip-free Markov chains}\label{sec4}

In this section, we discuss the potentials of continuous-time skip-free Markov chains.
We initially introduce the potential theory for general continuous-time Markov chains (CTMCs).
Let $\boldsymbol{X}=\{X_t, t\geq0\}$ be an irreducible CTMC on the state space ${\mathbb{S}}$, with a totally stable and regular $Q$-matrix $\boldsymbol{Q}=(Q(i,j))_{i,j\in{\mathbb{S}}}$.
Given a CTMC $\boldsymbol{X}$ and a finite nonnegative function $\boldsymbol{c}$, we define the function $\boldsymbol{\psi}$ as the following expectation of an integrable functional:
\[\psi(i)=\mathbb{E}_{i}\bigg[\int_{0}^{\infty}c(X_t)\mathrm{d}t\bigg],\ \ i\in{\mathbb{S}}.\]
Then, $\boldsymbol{\psi}$ represents the potential of the chain $\boldsymbol{X}$ and the function $\boldsymbol{c}$, and it is the minimal nonnegative solution of the following equation:
\begin{equation}\label{ctmc-1}
-\boldsymbol{Q}\boldsymbol{\psi}=\boldsymbol{c}.
\end{equation}
For more details, see, e.g., \cite{HG88}.

Now, let $_{(n)}\boldsymbol{Q}$ be the $(n+1)\times (n+1)$ northwest corner of $\boldsymbol{Q}$,
and let ${_{(n)}\boldsymbol{X}} = \{{_{(n)}{X}_t}, t \geq 0\}$ be the CTMC with $Q$-matrix $_{(n)}\boldsymbol{Q}$. The potential of  ${_{(n)}\boldsymbol{X}}$ and $_{(n)}\boldsymbol{c}$ is given by
\[_{(n)}\psi(i)=\mathbb{E}_{i}\bigg[\int_{0}^{\infty}{_{(n)}c}({_{(n)}X_t})\mathrm{d}t\bigg],\ \ i\in{_{(n)}\mathbb{S}}.\]

Using a technique similar to the one in Theorem \ref{theorem-general-mc}, we can derive the following convergence theorem for the potential $\boldsymbol{\psi}$ of CTMCs.

\begin{theorem}\label{theorem-general-ctmc}
Assume that the CTMC $\boldsymbol{X}$ is irreducible with a totally stable and regular $Q$-matrix $\boldsymbol{Q}$,
and the nonnegative function $\boldsymbol{c}$ on $\mathbb{S}$ is finite. Then, we have
\[\lim_{n\rightarrow\infty}{_{(n)}}\psi(i)\uparrow\psi(i),\ \ i\in\mathbb{S}.\]
\end{theorem}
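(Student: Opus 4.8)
The plan is to mimic the proof of Theorem \ref{theorem-general-mc}, reducing the continuous-time statement to the monotone-convergence theorem for minimal nonnegative solutions of nonnegative linear systems (Theorem 2.7 of \cite{c04}). The only genuinely new ingredient is that the generator $\boldsymbol{Q}$ carries negative diagonal entries, so the equation $-\boldsymbol{Q}\boldsymbol{\psi}=\boldsymbol{c}$ cannot be fed directly into that theorem; I would first normalize by the holding rates to recover a nonnegative fixed-point equation.

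Set $q_i=-Q(i,i)$, which is finite and strictly positive for every $i$ since $\boldsymbol{Q}$ is totally stable and $\boldsymbol{X}$ is irreducible. Writing $-\boldsymbol{Q}\boldsymbol{\psi}=\boldsymbol{c}$ row by row and dividing by $q_i$ yields
\[
\psi(i)=\sum_{j\neq i}\frac{Q(i,j)}{q_i}\,\psi(j)+\frac{c(i)}{q_i},\qquad i\in\mathbb{S},
\]
which is the fixed-point equation $\boldsymbol{\psi}=\boldsymbol{\Pi}\boldsymbol{\psi}+\tilde{\boldsymbol{c}}$ for the jump matrix $\boldsymbol{\Pi}=(Q(i,j)/q_i)_{i\neq j}$ and the normalized forcing term $\tilde{c}(i)=c(i)/q_i$; by (\ref{ctmc-1}) its minimal nonnegative solution is $\boldsymbol{\psi}$. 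Because taking the $(n+1)\times(n+1)$ northwest corner preserves the diagonal entries $Q(i,i)$, the truncated equation $-{_{(n)}\boldsymbol{Q}}{_{(n)}\boldsymbol{\psi}}={_{(n)}\boldsymbol{c}}$ normalizes by the very same rates $q_i$, so ${_{(n)}\boldsymbol{\psi}}$ is the minimal nonnegative solution of ${_{(n)}\boldsymbol{\psi}}={_{(n)}\boldsymbol{\Pi}}{_{(n)}\boldsymbol{\psi}}+{_{(n)}\tilde{\boldsymbol{c}}}$, where ${_{(n)}\boldsymbol{\Pi}}$ is the corresponding corner of $\boldsymbol{\Pi}$.

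Then I would zero-extend ${_{(n)}\boldsymbol{\psi}}$, ${_{(n)}\boldsymbol{\Pi}}$, and ${_{(n)}\tilde{\boldsymbol{c}}}$ to $\mathbb{S}$ exactly as in (\ref{n-phi})--(\ref{n-P}), obtaining ${_{(n)}\check{\boldsymbol{\psi}}}={_{(n)}\check{\boldsymbol{\Pi}}}\,{_{(n)}\check{\boldsymbol{\psi}}}+{_{(n)}\check{\tilde{\boldsymbol{c}}}}$. Since every entry of $\boldsymbol{\Pi}$ and of $\tilde{\boldsymbol{c}}$ is nonnegative and filling in further rows and columns only adds nonnegative terms, the analogue of (\ref{proposition-1-6}) holds, namely ${_{(n)}\check{\boldsymbol{\Pi}}}\uparrow\boldsymbol{\Pi}$ and ${_{(n)}\check{\tilde{\boldsymbol{c}}}}\uparrow\tilde{\boldsymbol{c}}$. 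Applying Theorem 2.7 of \cite{c04} to these two nonnegative fixed-point systems gives ${_{(n)}\check{\boldsymbol{\psi}}}\uparrow\boldsymbol{\psi}$, and reading off the first $n+1$ coordinates yields ${_{(n)}}\psi(i)\uparrow\psi(i)$ for each $i\in\mathbb{S}$.

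The main obstacle is the normalization step: one must verify that dividing by $q_i$ is legitimate (total stability and irreducibility give $0<q_i<\infty$) and, more importantly, that truncation and normalization commute, i.e.\ that the northwest corner of $\boldsymbol{Q}$ normalizes to the northwest corner of $\boldsymbol{\Pi}$ with the identical rates $q_i$ on both sides. This is precisely where regularity of $\boldsymbol{Q}$ matters: it guarantees, via (\ref{ctmc-1}), that $\boldsymbol{\psi}$ is genuinely the \emph{minimal} nonnegative solution rather than merely some solution, so that the monotone-convergence theorem converges to the correct limit. One should also note that each ${_{(n)}\boldsymbol{\psi}}$ is well defined and finite, since irreducibility forces the truncated (killed) chain to be transient and hence $-{_{(n)}\boldsymbol{Q}}$ to be an invertible $M$-matrix. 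Once the equation is in nonnegative fixed-point form, the remainder of the argument is identical to that of Theorem \ref{theorem-general-mc} and needs no new estimates.
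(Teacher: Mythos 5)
Your proposal is correct and follows essentially the same route as the paper: the paper likewise divides each row of $-\boldsymbol{Q}\boldsymbol{\psi}=\boldsymbol{c}$ by the holding rate $-Q(i,i)$ to rewrite the system as $\boldsymbol{\psi}=\boldsymbol{P_Q}\boldsymbol{\psi}+\boldsymbol{c_Q}$ with the embedded-chain matrix $\boldsymbol{P_Q}$ (your $\boldsymbol{\Pi}$) and normalized forcing $\boldsymbol{c_Q}$ (your $\tilde{\boldsymbol{c}}$), does the same for the truncated system, and then invokes the zero-extension and monotone-convergence argument of Theorem \ref{theorem-general-mc}. Your additional remarks on why the normalization is legitimate and why truncation commutes with it are correct and only make explicit what the paper leaves implicit.
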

\begin{proof}
According to Eq.(\ref{ctmc-1}), we know that ${_{(n)}}\boldsymbol{\psi}$ is the minimal nonnegative solution of the following equation:
\begin{equation}\label{ctmc-2}
-{_{(n)}\boldsymbol{Q}}{{_{(n)}}\boldsymbol{\psi}}={_{(n)}\boldsymbol{c}}.
\end{equation}
On the other hand, Eq.(\ref{ctmc-1})  can be rewritten as
\begin{equation*}
\boldsymbol{\psi}=\boldsymbol{P_{Q}}\boldsymbol{\psi}+\boldsymbol{c_Q},
\end{equation*}
where $\boldsymbol{P_{Q}}$ is the transition probability matrix of the embedded chain of the CTMC, i.e.,
\begin{equation*}
 {\boldsymbol{P_{Q}}}=\left (
 \begin{array}{ccccccc}
   0 & -\frac{Q(0,1)}{Q(0,0)} & -\frac{Q(0,2)}{Q(0,0)} &-\frac{Q(0,3)}{Q(0,0)} &\cdots \\
  -\frac{Q(1,0)}{Q(1,1)}   &0 &   -\frac{Q(1,2)}{Q(1,1)}  &  -\frac{Q(1,3)}{Q(1,1)}  & \cdots  \\
 -\frac{Q(2,0)}{Q(2,2)}  &  -\frac{Q(2,1)}{Q(2,2)}  &0 &  -\frac{Q(2,3)}{Q(2,2)} & \cdots \\
   -\frac{Q(3,0)}{Q(3,3)} & -\frac{Q(3,1)}{Q(3,3)} &-\frac{Q(3,2)}{Q(3,3)}& 0 &\cdots  \\
       \vdots &\vdots  &\vdots &\vdots&\ddots \\
   \end{array}\right ),
\end{equation*}
and
\[\boldsymbol{c_Q}=\bigg(-\frac{c(0)}{Q(0,0)},-\frac{c(1)}{Q(1,1)},-\frac{c(2)}{Q(2,2)},\cdots\bigg).\]
Similarly, Eq.(\ref{ctmc-2}) can be rewritten as
\begin{equation*}
{_{(n)}\boldsymbol{\psi}}={_{(n)}\boldsymbol{P_{Q}}}{_{(n)}\boldsymbol{\psi}}+{_{(n)}\boldsymbol{c_Q}},
\end{equation*}
where \({_{(n)}\boldsymbol{P_{Q}}}\) and \({_{(n)}\boldsymbol{c_Q}}\) are the truncated \((n+1)\)-dimensional matrix and \((n+1)\)-dimensional function of \(\boldsymbol{P_{Q}}\) and \(\boldsymbol{c_Q}\), respectively.

The remaining part of the proof follows the same approach as in Theorem \ref{theorem-general-mc}, and is therefore omitted here.
\end{proof}

Next, we give the potentials $\boldsymbol{\psi}$ of continuous-time upward and downward skip-free Markov chains.
We call that the CTMC $\boldsymbol{X}$ is a continuous-time upward skip-free Markov chain
if its $Q$-matrix $\boldsymbol{Q}$ satisfies $Q(i,i+1)>0$ for all $i\geq0$ and $Q(i,k)=0$ for all $k-i\geq2$; and
$\boldsymbol{X}$ is a continuous-time downward skip-free Markov chain if its $Q$-matrix $\boldsymbol{Q}$ satisfies
 $Q(i,i-1)>0$ for all $i\geq1$ and $Q(i,k)=0$ for all $i-k\geq2$.
 Similar to Section \ref{sec2}, we define the following notations:
\begin{equation*}\label{Notation-1-1}
{Q_{n}^{(k-)}}=\sum_{i=0}^{k}Q(n,i),\ \ 0\leq k<n,
\end{equation*}
\begin{equation*}\label{Notation-2-1}
\widetilde{F}_{i}^{(i)}=1, \ \ i\geq 0,\ \ \widetilde{F}_{n}^{(i)}=\frac{1}{Q(n,n+1)}\sum_{k=i}^{n-1}{Q_{n}^{(k-)}}\widetilde{F}_{k}^{(i)}, \ \ 0\leq i<n;
\end{equation*}
and
\[
Q_{m}^{(k+)}=\sum_{i=k}^{\infty}Q(m,i),\ \ k>m\geq0,
\]
\[
\widetilde{H}_{i}^{(i)}=1, \ \ \widetilde{H}_{m}^{(i)}=\frac{1}{Q(m,m-1)}\sum_{k=m+1}^{i}Q_{m}^{(k+)}\widetilde{H}_{k}^{(i)}, \ 1\leq m<i.
\]

According to Theorem \ref{theorem-general-ctmc} and the same approach as in Theorems \ref{theorem-1} and \ref{theorem-2} ,
we have the following results.

\begin{theorem}
Assume that $\boldsymbol{X}$ is an irreducible continuous-time upward skip-free Markov chain with a totally stable and regular $Q$-matrix $\boldsymbol{Q}$,
and the nonnegative function $\boldsymbol{c}$ on $\mathbb{S}$ is finite.
Then, we have
 \begin{equation}\label{theorem-5}
  \psi(i)=\sum_{m=i}^{\infty}\sum_{k=0}^{m}\frac{\widetilde{F}_{m}^{(k)}c(k)}{Q(k,k+1)}, \ \ i \in\mathbb{S},
 \end{equation}
and this is finite if and only if $\psi(0)<\infty$.
\end{theorem}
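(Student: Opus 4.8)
The plan is to mirror the proof of Theorem~\ref{theorem-1} almost verbatim, but carried out at the level of the $Q$-matrix rather than the transition matrix $\boldsymbol P$. First I would truncate $\boldsymbol Q$ to its $(n+1)\times(n+1)$ northwest corner $_{(n)}\boldsymbol Q$; because the upward skip-free structure lets row $i$ reach only up to $i+1$, the truncation discards just the single super-diagonal entry $Q(n,n+1)$, so $_{(n)}\boldsymbol Q$ is again an upward skip-free generator, now sub-conservative in its last row. By Eq.~(\ref{ctmc-2}), $_{(n)}\boldsymbol\psi$ is the minimal (hence unique) nonnegative solution of $-{_{(n)}\boldsymbol Q}\,{_{(n)}\boldsymbol\psi}={_{(n)}\boldsymbol c}$, which written out row by row reads $-\sum_{j=0}^{i+1}Q(i,j)\,{_{(n)}\psi(j)}=c(i)$ for $0\le i\le n-1$, together with $-\sum_{j=0}^{n}Q(n,j)\,{_{(n)}\psi(j)}=c(n)$ for the last row.

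The decisive step is the reduction of these equations to a first-order recursion in the differences $\Delta(i):={_{(n)}\psi(i)}-{_{(n)}\psi(i+1)}$, with the boundary convention ${_{(n)}\psi(n+1)}:=0$ so that $\Delta(n)={_{(n)}\psi(n)}$. Here the continuous-time argument parts ways with the discrete one: in place of the stochastic identity $\sum_j P(i,j)=1$, I would exploit conservativity $\sum_{j}Q(i,j)=0$. Subtracting $\big(\sum_{j}Q(i,j)\big){_{(n)}\psi(i+1)}=0$ from the $i$-th row and telescoping ${_{(n)}\psi(j)}-{_{(n)}\psi(i+1)}=\sum_{l=j}^{i}\Delta(l)$, an interchange of summation collapses the row into $Q(i,i+1)\Delta(i)-\sum_{k=0}^{i-1}Q_i^{(k-)}\Delta(k)=c(i)$, where I use $Q_i^{(i-)}=-Q(i,i+1)$. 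This is formally identical to Eq.~(\ref{theorem-1-1}) with $Q$ replacing $P$; moreover the same manipulation applies to the last row once the null $j=n+1$ term (null because ${_{(n)}\psi(n+1)}=0$) is appended, so all rows $0\le i\le n$ obey one uniform recursion. Solving it by the continuous-time analogue of Lemma~\ref{lemma-upward} — an algebraic identity that holds verbatim with $\widetilde F$ and $Q$ in place of $F$ and $P$ — yields $\Delta(i)=\sum_{k=0}^{i}\widetilde F_i^{(k)}c(k)/Q(k,k+1)$.

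Summing $\Delta(m)$ over $i\le m\le n$ telescopes to ${_{(n)}\psi(i)}=\sum_{m=i}^{n}\sum_{k=0}^{m}\widetilde F_m^{(k)}c(k)/Q(k,k+1)$, and letting $n\to\infty$ through Theorem~\ref{theorem-general-ctmc} delivers the claimed formula in Eq.~(\ref{theorem-5}). For the finiteness dichotomy I would observe that every summand is nonnegative: $c(k)\ge 0$, $Q(k,k+1)>0$, and $\widetilde F_m^{(k)}\ge 0$ by an easy induction from $Q_n^{(k-)}\ge 0$ for $k<n$ (these partial sums contain only off-diagonal, hence nonnegative, entries). Consequently $\psi(i)$ is a tail sum over $m\ge i$ and is non-increasing in $i$, so $\psi(0)=\sup_i\psi(i)$; therefore $\boldsymbol\psi$ is finite everywhere precisely when $\psi(0)<\infty$.

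The main obstacle is the algebraic reduction in the second step: making the conservativity relation $\sum_jQ(i,j)=0$ play the role that the stochastic normalization $\sum_jP(i,j)=1$ plays in Theorem~\ref{theorem-1}, and verifying that the sub-conservative last row of $_{(n)}\boldsymbol Q$ folds into the \emph{same} recursion rather than generating a boundary correction term of the kind that genuinely appears in the downward case of Theorem~\ref{theorem-2}. Once that reduction is secured, the remainder is routine telescoping together with a single appeal to the truncation convergence theorem.
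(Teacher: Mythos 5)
Your proposal is correct and follows essentially the same route as the paper, which for this theorem simply states that the result follows from Theorem~\ref{theorem-general-ctmc} by "the same approach as in Theorem~\ref{theorem-1}"; your write-up supplies exactly those details, correctly identifying that the conservativity relation $\sum_j Q(i,j)=0$ plays the role of the stochastic normalization $\sum_j P(i,j)=1$ (so that $Q_i^{(i-)}=-Q(i,i+1)$ closes the difference recursion) and that the sub-conservative last row folds into the same recursion under the convention ${_{(n)}\psi(n+1)}=0$. No gaps.
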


\begin{theorem}
Assume that $\boldsymbol{X}$ is an irreducible continuous-time downward skip-free Markov chain with a totally stable and regular $Q$-matrix $\boldsymbol{Q}$,
and the nonnegative function $\boldsymbol{c}$ on $\mathbb{S}$ is finite.
Then, we have
 \begin{equation}\label{theorem-6}
  \psi(i)={\eta} \widetilde{M}(i)-\sum_{m=i+1}^{\infty}\sum_{k=m}^{\infty}\frac{\widetilde{H}_m^{(k)}c(k)}{Q(k,k-1)}, \ \ i \in\mathbb{S},
 \end{equation}
 where
 \[\eta=\sum_{i=1}^{\infty} Q_{0}^{(i+)}\sum_{k=i}^{\infty}\frac{\widetilde{H}_i^{(k)}c(k)}{Q(k,k-1)}+c(0),\ \ \widetilde{M}(i)=\lim_{n\rightarrow\infty}\frac{\sum_{m=i+1}^{n}\widetilde{H}_m^{(n)}}{\sum_{i=1}^{n} Q_{0}^{(i+)}\widetilde{H}_i^{(n)}},\]
and this is finite if and only if $\psi(0)<\infty$.
\end{theorem}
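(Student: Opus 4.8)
The plan is to run the argument of Theorem \ref{theorem-2} almost verbatim, with the $Q$-matrix replacing the transition matrix and with Theorem \ref{theorem-general-ctmc} used in place of Theorem \ref{theorem-general-mc} at the passage to the limit. First I would truncate $\boldsymbol{Q}$ to its $(n+1)\times(n+1)$ northwest corner $_{(n)}\boldsymbol{Q}$; since $-{_{(n)}\boldsymbol{Q}}$ is invertible, Eq.(\ref{ctmc-2}) identifies $_{(n)}\boldsymbol{\psi}$ as the (minimal nonnegative, and in fact unique) solution of $-{_{(n)}\boldsymbol{Q}}\,{_{(n)}\boldsymbol{\psi}}={_{(n)}\boldsymbol{c}}$. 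Reading this off row by row and using the downward skip-free structure, the state-$0$ equation is $-\sum_{j=0}^{n}Q(0,j){_{(n)}\psi}(j)=c(0)$, while for $1\le i\le n$ one has $-\sum_{j=i-1}^{n}Q(i,j){_{(n)}\psi}(j)=c(i)$.

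Next I would set $\Theta(i)={_{(n)}\psi}(i)-{_{(n)}\psi}(i-1)$ for $1\le i\le n$ and rewrite both families of equations in terms of $\Theta$, using conservativeness $\sum_{j}Q(i,j)=0$ to express the diagonal entry through the tail rates $Q_m^{(k+)}$. This reproduces the $Q$-analogs of Eqs.(\ref{theorem-2-1})--(\ref{theorem-2-3}), in particular the backward recursion $Q(i,i-1)\Theta(i)=\sum_{k=i+1}^{n}Q_i^{(k+)}\Theta(k)-Q_i^{((n+1)+)}{_{(n)}\psi}(n)+c(i)$. Solving it by the $Q$-version of Lemma \ref{lemma-single-death} gives $\Theta(i)=\sum_{k=i}^{n}\frac{\widetilde H_i^{(k)}c(k)}{Q(k,k-1)}-m(i)\,{_{(n)}\psi}(n)$, where $m(i)=\sum_{k=i}^{n}\frac{\widetilde H_i^{(k)}Q_k^{((n+1)+)}}{Q(k,k-1)}$.

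The main obstacle, exactly as in Theorem \ref{theorem-2}, is the auxiliary identity $m(i)=\widetilde H_i^{(n+1)}$ for $1\le i\le n$; I would prove it by checking, as in Eqs.(\ref{m(i)-1})--(\ref{m(i)-2}), that both sides obey the same first-order backward recursion with terminal data fixed by $\widetilde H_{n+1}^{(n+1)}=1$. Feeding $m(i)=\widetilde H_i^{(n+1)}$ into the state-$0$ equation then yields the closed form for the boundary value $_{(n)}\psi(n)$, the $Q$-analog of Eq.(\ref{theorem-2-6}), and summing the increments reconstructs
\[
{_{(n)}\psi}(i)=\frac{\sum_{m=i+1}^{n+1}\widetilde H_m^{(n+1)}}{\sum_{i=1}^{n+1}Q_0^{(i+)}\widetilde H_i^{(n+1)}}\,\eta_n-\sum_{m=i+1}^{n}\sum_{k=m}^{n}\frac{\widetilde H_m^{(k)}c(k)}{Q(k,k-1)},
\]
where $\eta_n=\sum_{i=1}^{n}Q_0^{(i+)}\sum_{k=i}^{n}\frac{\widetilde H_i^{(k)}c(k)}{Q(k,k-1)}+c(0)$ is the $n$-truncation of $\eta$. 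Letting $n\to\infty$ and invoking Theorem \ref{theorem-general-ctmc}, the ratio tends to $\widetilde M(i)$ and $\eta_n\uparrow\eta$, so $_{(n)}\psi(i)\uparrow\psi(i)$ coincides with the right-hand side of Eq.(\ref{theorem-6}).

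Finally, the finiteness dichotomy follows the monotone-domination scheme of Theorem \ref{theorem-2} (cf.\ Eqs.(\ref{finite-1})--(\ref{finite-3})). Sufficiency is trivial; for necessity, $\psi(0)<\infty$ forces $\sum_{m=1}^{\infty}\sum_{k=m}^{\infty}\frac{\widetilde H_m^{(k)}c(k)}{Q(k,k-1)}<\infty$. The only point needing adjustment relative to the discrete case is that the tail rates $Q_0^{(i+)}$ are no longer bounded by $1$; instead, total stability and conservativeness give $Q_0^{(i+)}\le Q_0^{(1+)}=|Q(0,0)|<\infty$, whence $\eta\le|Q(0,0)|\sum_{m=1}^{\infty}\sum_{k=m}^{\infty}\frac{\widetilde H_m^{(k)}c(k)}{Q(k,k-1)}+c(0)<\infty$ and $\widetilde M(i)\le\widetilde M(0)<\infty$. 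Combining these bounds as in Theorem \ref{theorem-2} yields $\psi(i)<\infty$ for every $i\in\mathbb{S}$, completing the plan.
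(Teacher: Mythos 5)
Your proposal is correct and is essentially the proof the paper intends: the paper omits the argument entirely, stating only that the result follows from Theorem \ref{theorem-general-ctmc} by ``the same approach as in Theorems \ref{theorem-1} and \ref{theorem-2},'' and your step-by-step transcription (truncation, the $\Theta$-recursion via conservativeness, the identity $m(i)=\widetilde H_i^{(n+1)}$, the boundary value $_{(n)}\psi(n)$, and the passage to the limit) is exactly that argument. Your added observation that the finiteness step must replace the bound $P_0^{(i+)}<1$ by $Q_0^{(i+)}\le Q_0^{(1+)}=|Q(0,0)|<\infty$ is a correct and worthwhile refinement of a detail the paper glosses over.
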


\begin{remark}
When the CTMC $\boldsymbol{X}$ is a continuous-time birth-death Markov chains (birth-death process), then
(\ref{theorem-5}) or (\ref{theorem-6}) becomes
\[\psi(i)=\sum_{m=i}^{\infty} \frac{1}{Q(m,m+1)\pi(m)}\sum_{k=0}^{m}\pi(k)c(k), \ \ i\in\mathbb{S},\]
where $\pi(0)=1$, and $\pi(k)=\prod_{m=1}^{k}\frac{Q(m-1,m)}{Q(m,m-1)}$ for $k\geq 1$. The results can also be referred to \cite{P03}.
\end{remark}


\begin{thebibliography}{30}

\bibitem{AW89}
J. Abate,  W. Whitt. Spectral Theory for Skip-Free Markov Chains. Probability in the Engineering \& Informational Sciences, 1989, 3(1): 77-88.

\bibitem{A86}
A. Adikahri. Skip free processes. PhD thesis, University of California, Berkeley, 1986.

\bibitem{A03}
S. Asmussen. Applied Probability and Queues. 2nd Edition, New York, Springer-Verlag, 2003.

\bibitem{AV19}
F. Avram,  M. Vidmar. First passage problems for upwards skip-free random walks via the scale functions paradigm. Advances in Applied Probability, 2019, 51(2): 408-424.

\bibitem{JPYP2015}
J. Bai, P. Li, Y. Zhang, P. Zhao. Some criteria on discrete time single birth processes (in chinese). Journal of Beijing Normal University (Natural Science), 2015, 51(3),227-235.

\bibitem{B07}
D. Bertsekas. Dynamic Programming and Optimal Control. 3rd Edition, Cambridge, Athena Scientiﬁc, 2007.

\bibitem{BGR82}
P. J. Brockwell, J. Gani, and S. I. Resnick. Birth, immigration and catastrophe processes. Advances in Applied Probability, 1982, 14(4): 709-731.

\bibitem{c04}
M. Chen. From Markov Chains to Non-Equilibrium Particle Systems, 2nd Edition, Singapore, World Scientific, 2004.

\bibitem{CP19}
M. C. H. Choi,  P. Patie. Skip-free markov chains. Transactions of the American Mathematical Society, 2019,  371(10): 7301-7342.

\bibitem{CD56}
G. Choquet, J. Deny.  Mod\`{e}les finis en th\'{e}orie du potentiel. Journal d'Analyse Math\'{e}matique, 1956, 5(1): 77-135.

\bibitem{DM21}
S. Dereich,  M. Maiwald. Quasi-processes for branching Markov chains.  2021. https://arxiv.org/abs/2107.06654v1

\bibitem{D59}
J. Doob. Corrections to discrete potential theory and boundaries. Indiana University Mathematics Journal, 1959, 8(6): 993-993.

\bibitem{HC99}
C. M. Hern\'{a}ndez-Su\'{a}rez,  C. Castillo-Chavez. A basic result on the integral for birth-death Markov processes.  Mathematical Biosciences, 1999, 161(1-2): 95-104.

\bibitem{HG88}
Z. Hou, Q. Guo. Homogeneous Denumerable Markov Processes. New York/Beijing, Springer-Verlag/Science Press, 1988.

\bibitem{H57}
G. A. Hunt. Markov processes and potentials I, II, III. Illinois Journal of Mathematics, 1957, 1: 44-93, 316-369; 1958, 2: 151-213.

\bibitem{J72}
J. G. Jerwood. The cost of a general stochastic epidemic. Journal of Applied Probability, 1972, 9(2): 257-269.

\bibitem{J19}
C. Jia. Sharp moderate maximal inequalities for upward skip-free Markov chains.  Journal of Theoretical Probability, 2019, 32: 1382-1398.

\bibitem{JLY14}
S. Jiang, Y. Liu, S. Yuan. Poisson's equation for discrete-time single-birth processes. Statistics \& Probability Letters, 2014, 85: 78-83.

\bibitem{KS61}
J. G. Kemeny, J. L. Snell. Potentials for denumerable Markov chains. Journal of Mathematical Analysis \& Applications, 1961, 3(2): 196-260.

\bibitem{LLZ22}
J. Liu, Y. Liu, Y.Q. Zhao. Augmented truncation approximations to the solution of Poisson's equation for Markov chains. Applied Mathematics and Computation, 2022, 414: 126610.

\bibitem{LPW24}
R. Loeffen, P. Patie, J. Wang. Fluctuation theory of continuous-time, skip-free downward Markov chains with applications to branching processes with immigration. Mathematics of Operations Research, 2024.

\bibitem{MYZ22}
Y. Mao, Y. Yan, Y. Zhang. Criteria for three classical problems of downwardly skip-free processes. 2022. (https://doi.org/10.21203/rs.3.rs-2329745/v1)

\bibitem{MZZ16}
Y. Mao,  C. Zhang,  Y. Zhang. Separation cutoff for upward skip-free chains. Journal of Applied Probability, 2016, 53(01): 299-306.

\bibitem{N97}
J. R. Norris.  Markov Chains.  Cambridge, Cambridge University Press, 1997.

\bibitem{P03}
P. K. Pollett. Integrals for continuous-time Markov chains. Mathematical Biosciences, 2003, 182(2): 213-225.

\bibitem{P67}
P. S. Puri. A class of stochastic models of response after infection in theabsence of defense mechanism.
Proceedings of the Fifth Berkeley Symposium on Mathematical Statistics and Probability, Volume 4: Biology and Problems of Health, University of California Press, 1967, 5: 511-536.

\bibitem{R84}
S. Ramakrishnan.  Potential theory for finitely additive Markov chains. Stochastic Processes and their Applications, 1984, 16(3): 287-303.

\bibitem{SYL17}
N. Saldi,  S. Y\"{u}ksel,  T. Linder. Asymptotic optimality of finite approximations to Markov decision processes with general state and action spaces. Mathematics of Operations Research, 2017, 42(4): 945-978.

\bibitem{SM17}
D. Silvestrov, R. Manca.  Reward algorithms for semi-Markov processes. Methodology and Computing in Applied Probability, 2017, 19: 1191-1209.

\bibitem{SS00}
V. T. Stefanov,  S, Wang. A note on integrals for birth-death processes. Mathematical Biosciences, 2000, 168(2): 161-165.

\end{thebibliography}
\end{document}